\documentclass[a4paper,12pt]{amsart}

\usepackage{amsmath,amsthm,amsfonts,amssymb,mathrsfs,setspace,graphicx}
\usepackage{float}
\usepackage[margin=1.2in]{geometry}
\theoremstyle{plain}
\newtheorem{theorem}{Theorem}[section]

\newtheorem{corollary}[theorem]{Corollary}

\theoremstyle{definition}
\newtheorem{example}[theorem]{Example}

\theoremstyle{definition}
\newtheorem{definition}[theorem]{Definition}

\theoremstyle{remark}
\newtheorem{remark}[theorem]{Remark}
\numberwithin{equation}{section}

\allowdisplaybreaks

\begin{document}
\title{{Invariants of Bipartite Kneser B type-\MakeLowercase{k} graphs}}

\author{Jayakumar C.}
\address{Department of  Mathematics, University College, Thiruvananthapuram, India.}
\email{jkcwarrier@gmail.com}
\author{Sreekumar K.G.}
\address{Department of  Mathematics,   University  of Kerala,  Thiruvananthapuram,    India.}
\email{sreekumar3121@gmail.com}
\author{Manilal K.}
\address{Department of Mathematics, University College, Thiruvananthapuram, India.}
\email{manilalvarkala@gmail.com}

\author{Ismail Naci Cangul}
\address{Department of Mathematics, Bursa Uludag University, Bursa 16059 Bursa-Turkey.}
\email{ cangul@uludag.edu.tr}
	
\subjclass[2010]{05C07, 05C12, 05C69}
\keywords{Bipartite Kneser graphs, Bipartite Kneser B type-k graph, Degree sequence, 
cyclomatic number}
	
\begin{abstract}
Let $\mathscr{B}_n = \{ \pm x_1, \pm x_2, \pm x_3, \cdots, \pm x_{n-1}, x_n \}$ where $n>1$ is fixed, $x_i \in \mathbb{R}^+$, $i = 1, 2, 3, \cdots, n$ and $x_1 < x_2 < x_3 < \cdots < x_n$. Let $\phi(\mathscr{B}_n)$ be the set of all non-empty subsets $S = \{u_1, u_2,\cdots, u_t\}$ of $\mathscr{B}_n$ such that $|u_1|<|u_2|<\cdots <|u_{t-1}|<u_t $ where $u_t\in \mathbb{R}^+$. Let $\mathscr{B}_n^+ = \{ x_1, x_2, x_3, \cdots, x_{n-1}, x_n \}$. For a fixed $k$, let $V_1$ be the set of $k$-element subsets of $\mathscr{B}_n^+$, $1 \leq k <n$. $V_2= \phi(\mathscr{B}_n)-V_1$. For any $A \in V_2$, let $A^\dag = \{\lvert x \rvert: x \in A\}$. Define a bipartite graph with parts $V_1$ and $V_2$ and having adjacency as $X \in V_1$ is adjacent to $Y\in V_2$ if and only if $X \subset Y^\dag$ or $Y^\dag \subset X$. A graph of this type is called a bipartite Kneser B type-$k$ graph and denoted by $H_B(n,k)$. In this paper, we calculated various graph invariants of $H_B(n,k)$.
\end{abstract}
	\maketitle 
	
	\onehalfspacing
\section{Introduction}
Named after the German mathematician Martin Kneser, Kneser graphs are an interesting family of combinatorial structures in the field of graph theory. These graphs have applications in several fields, such as algebraic geometry, combinatorics, and topology.
	
Numerous fields, such as combinatorics, topology, coding theory, and combinatorial optimisation have Kneser graph applications. These are fundamental building blocks of combinatorial theory and can lead to interesting problems and conjectures. In this subject, questions about their chromatic number \cite{chromatickneser} and other graph-theoretic properties continue to be crucial to research. Kneser graphs are related to topological problems, for example, by helping to understand the homotopy type of some spaces\cite{homotopykneser}. Kneser graphs are used in coding theory\cite{knesercode} to design codes with efficient error-correcting features. 
 
The Kneser graph $K(n,k)$ has the $k$-subsets of $[n]$ as vertices. If the $k$-subsets are disjoint, then two vertices are adjacent. For integers $k\ge 1 $ and $n\ge  2k+1$, any vertex in the bipartite Kneser graph $H(n,k)$ is either a $k$-element subset or an $n-k$ element subset of  $[n]$. Here, the sets $A$ and $B$ are adjacent if  $A \subseteq B$.

The algebraic structures of different varieties of bipartite Kneser graphs, \cite{agong2018girth}, \cite{mirafzal2019automorphism}, were then built and investigated.\\

Here is a modified version of the bipartite Kneser graph : Let $\mathscr{S}_n=\{ 1, 2, 3, \cdots, n \}$ for a fixed integer $n>1$. Let $\phi(\mathscr{S}_n)$ be the set of all non-empty subsets of $\mathscr{S}_n$.  Let $V_1$ be the set of 1-element subsets of $\mathscr{S}_n$ and $V_2= \phi(\mathscr{S}_n) - V_1$. Define a bipartite graph with an adjacency of vertices as described: A vertex $A \in V_1$ is adjacent to a vertex $B \in V_2$ if and only if $A \subset B$. This graph is called a bipartite Kneser type-1 graph\cite{sreekumar2022automorphism},  and is denoted by $H_{T}(n, 1)$. Sreekumar K. G. et al., \cite{srk2023}, defined a bipartite Kneser B type-$k$ graph, $G=H_B(n,k)$, which are more general bipartite graphs.

This paper determines the following invariants of the bipartite Kneser B type-$k$ graph $H_B(n,k)$: Order, size, independence number, covering number, domination number, vertex connectivity, edge connectivity, girth, circuit rank, distance between two vertices, eccentricity, periphery, centre, median, and degree sequence.

\section{Preliminaries}

The greatest distance between any two vertices in a graph is known as its diameter. The eccentricity of a vertex, $e(v)$, is the largest possible distance between it and any other vertex.
The maximum eccentricity obtained by the vertices of a connected simple graph $G$ is the diameter, $diam(G)$. The least eccentricity among all vertices of $G$ is the radius, $rad(G)$. The centre $C(G)$ of a graph $G$ is the subgraph induced by the set of vertices with the lowest eccentricity. The periphery of $G$ is $P(G)= \{v\in V : e(v)=diam(G)\}$. The length of the shortest cycle in a graph is its girth.For any vertex $v$ of a connected graph $G$, the status of $v$ denoted as $s(v)$ is the sum of the distances from $v$ to other vertices of $G$. That is, $s(v)=\sum\limits_ {u\in V(G)} d(v,u)$. The set of vertices with minimal status is the median $M(G)$ of the graph.

The girth of a graph is the length of the shortest cycle in it.

\section{Basic definitions and examples}

\begin{definition}
Let $\mathscr{B}_n=\{ \pm x_1, \pm x_2, \pm x_3, \dots , \pm x_{n-1}, x_n \}$ where $n>1$ is fixed, $x_i \in \mathbb{R}^+$, $i= 1,2, 3, \dots, n$ and $x_1 < x_2 < x_3 < \dots < x_n$. Let $\phi(\mathscr{B}_n)$ be the set of all non-empty subsets $S=\{u_1,u_2,\dotsc,u_t\}$ of $\mathscr{B}_n$ such that $|u_1|<|u_2|<\dotsc <|u_{t-1}|<u_t $ where $u_t\in \mathbb{R}^+$. Let $\mathscr{B}_n^+=\{ x_1,  x_2,  x_3, \dots,  x_{n-1}, x_n \}$. For a fixed $k$, let  $V_1$ be the set of $k$-element subsets of $\mathscr{B}_n^+$, $1 \leq k <n$. $V_2= \phi(\mathscr{B}_n)-V_1$. For any $A\in V_2 $, let $A^\dag=\{\lvert x \rvert: x \in A\}$.
Define a bipartite  graph with parts $V_1$ and $V_2$ and having adjacency as $X \in V_1$ is adjacent to $Y \in V_2$ if and only if $X \subset Y^\dag$ or $Y^\dag \subset X$. A graph of this type is called the bipartite Kneser B type-$k$ graph \cite{srk2023} and is denoted by $H_B(n,k)$.
\end{definition}
\begin{definition}
An \textbf{$r$-vertex} in $H_B(n,k)$ is an element in $\phi(\mathscr{B}_n)$ containing $r$ elements, where $1\le r\le n$. Members of $\phi(\mathscr{B}_n)$ are called \textbf{$r$-vertices}.
\end{definition}

\begin{example}
	An example of a  bipartite  Kneser B type-1 graph, for $n=2$,namely  $H_B(2,1)$ is  shown in  \textsc{figure} \ref{first}. 

\begin{center}
	\begin{figure}[h!]	
  \includegraphics[scale=0.5]{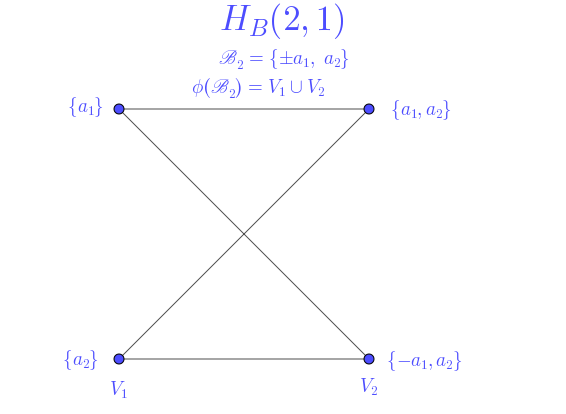}
  
  \caption{$H_B(2,1)$}
  \label{first}
  \end{figure}
\end{center}

\end{example}
\begin{example}


$H_B(n,k)$ for $n=3,\ k=2$ is illustrated in \textsc{figure} \ref{second}.

The $2$-vertex $\{1,2\}\in V_1$ is adjacent to the $1$-vertex $\{1\}$ as 
$\{1\}\subset \{1,2\}^\dag=\{|1|,|2|\}=\{1,2\}$. By the same argument, $\{1,2\}$ is adjacent to the $1$-vertex $\{2\}$. Also, $\{1,2\}$ is adjacent to $\{-1,2\}$ as $\{1,2\}\subset \{-1,2\}^\dag=\{|-1|,|2|\}=\{1,2\}$. Similar arguments explain the adjacency shown in \textsc{figure} \ref{second}.

\begin{center}
	\begin{figure}[h!]	

  \includegraphics[scale=0.4]{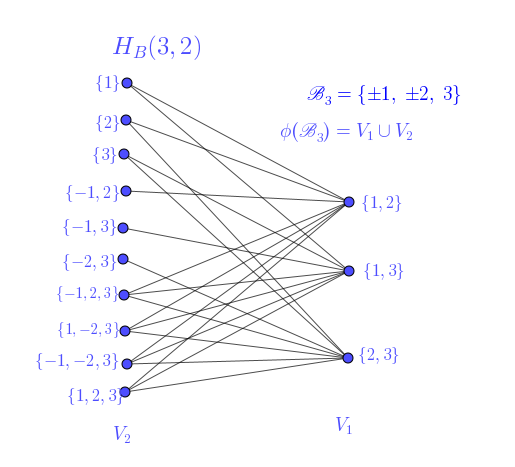}
  \caption{$H_B(3,2)$}
  \label{second}
  \end{figure}
\end{center} \end{example}

\newpage
\begin{example}\label{hbexample}

 Given the large size of $ H_{B}(4,2) $, we will present its bipartition here. According to the definition of $ H_{B}(n,k) $, a vertex $ A $ in $ V_1 $ is adjacent to a vertex $ B $ in $ V_2 $ if and only if $ A \subset B^\dag $ or $ B^\dag \subset A $.

\hspace{0.6cm}	$\mathscr{B}_4=\{ \pm 1,\; \pm 2, \;\pm 3,\; 4 \}$,
	
	  \hspace{0.7cm} $V_1=\big\{\{1,2\},\{1,3\},\{1,4\},\{2,3\},\{2,4\},\{3,4\}\big\}$,
	\begin{align*} V_2&=\big\{\{1\},\{2\},\{3\},\{4\},\{-1,2\},\{-1,3\},
	\{-1,4\}\\
	&\quad\{-2,3\},\{-2,4\},\{-3,4\},\{1,2,3\},\{-1,2,3\},\{1,-2,3\},\{-1,-2,3\},\\
	&\quad  \{1,2,4\},\{-1,2,4\},\{1,-2,4\},\{-1,-2,4\} ,\{2,3,4\},\{-2,3,4\},\\
	&\quad\{2,-3,4\},\{-2,-3,4\},\{1,3,4\},\{-1,3,4\},\{1,-3,4\},\{-1,-3,4\}\\
	&\quad \{1,2,3,4\},\{-1,2,3,4\},\{1,-2,3,4\},\{1,2,-3,4\},\{-1,-2,3,4\}\big\}\\
	&\quad \{1,-2,-3,4\},\{-1,2,-3,4\},\{-1,-2,-3,4\}\big\}.\end{align*}

 While $V_1$ contains six $2$-vertices, $V_2$ contains four $1$-vertices, six $2$-vertices, sixteen $3$-vertices, and eight $4$-vertices.
\end{example}

\section{Some parameters of bipartite Kneser B type-$k$ graphs}
\begin{theorem}
	The order of $G=H_B(n,k)$, $ |V(G)|=\dfrac{3^n-1}{2}$.
	
\end{theorem}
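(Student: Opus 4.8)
The plan is to observe first that the vertex set of $G$ is exactly $\phi(\mathscr{B}_n)$, so that the order equals $|\phi(\mathscr{B}_n)|$. Indeed, $V_1$ consists of the $k$-element subsets of $\mathscr{B}_n^+$, each of which lies in $\phi(\mathscr{B}_n)$, and $V_2 = \phi(\mathscr{B}_n) - V_1$ by definition; hence $V(G) = V_1 \cup V_2 = \phi(\mathscr{B}_n)$ as a disjoint union. The whole problem thus reduces to counting the admissible subsets $S = \{u_1, \dots, u_t\}$ of $\mathscr{B}_n$ satisfying $|u_1| < \cdots < |u_{t-1}| < u_t$ with $u_t > 0$.

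To count these, I would classify each $S \in \phi(\mathscr{B}_n)$ by the index $m$ of its element of largest absolute value, i.e.\ the largest $m$ with $\pm x_m \in S$. The defining condition forces this top element to be $+x_m$, so there is exactly one choice for it. For each of the remaining levels $1, 2, \dots, m-1$, the set $S$ may either omit that level, include $+x_i$, or include $-x_i$ --- three mutually exclusive options --- and the distinct-absolute-value condition holds automatically, since at most one element is chosen per level. Consequently the number of $S \in \phi(\mathscr{B}_n)$ with top index $m$ is exactly $3^{m-1}$.

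Summing over the possible positions $m = 1, 2, \dots, n$ of the top element then gives
\[
|V(G)| = \sum_{m=1}^{n} 3^{m-1} = \frac{3^n - 1}{3 - 1} = \frac{3^n - 1}{2},
\]
a finite geometric series. Alternatively, one may count by cardinality: an $r$-vertex is obtained by choosing $r$ of the $n$ levels in $\binom{n}{r}$ ways, fixing the top element as positive, and assigning either sign to the remaining $r-1$ levels, giving $\binom{n}{r}2^{r-1}$ subsets; then $\sum_{r=1}^{n}\binom{n}{r}2^{r-1} = \frac{1}{2}\bigl[(1+2)^n - 1\bigr]$ recovers the same value via the binomial theorem. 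The only point needing care in either approach is the bookkeeping that ensures each admissible subset is counted exactly once --- in particular that the largest-absolute-value element is always forced to be positive while all smaller levels are free --- so I expect the main (though modest) obstacle to be setting up this classification cleanly rather than the arithmetic itself.
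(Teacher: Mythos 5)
Your proposal is correct, and your primary argument takes a genuinely different route from the paper's. The paper counts vertices by cardinality: an $r$-vertex arises from choosing $r$ levels in $\binom{n}{r}$ ways and signing the non-maximal ones, giving $N_r = 2^{r-1}\binom{n}{r}$, and the total $\sum_{r=1}^{n} 2^{r-1}\binom{n}{r} = \frac{3^n-1}{2}$ is evaluated via the binomial theorem --- this is exactly the ``alternative'' you sketch at the end. Your main decomposition instead classifies each admissible set by the index $m$ of its element of largest absolute value, observes that this top element is forced to be $+x_m$ while each lower level independently contributes three options (omit, $+x_i$, or $-x_i$), and sums the resulting geometric series $\sum_{m=1}^{n} 3^{m-1}$. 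Both are clean; the geometric-series route avoids the binomial identity entirely and arguably makes the appearance of $\frac{3^n-1}{2}$ more transparent, while the paper's cardinality decomposition has the side benefit of producing the counts $N_r$ of $r$-vertices, which the paper reuses later (e.g.\ in the degree sequence and size computations). Your bookkeeping --- that distinctness of absolute values forces at most one element per level, and that the maximal level is always signed positive --- is exactly the point that needs care, and you handle it correctly.
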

\begin{proof}
	Every vertex of $H_B(n,k)$ is a set formed using the elements of $\mathscr{B}_n$. Here $\mathscr{B}_n=\{ \pm a_1, \pm a_2, \pm a_3,  \dots , \pm a_{n-1}, a_n \},\text{where },  a_1<a_2\dots<a_n \;\text{and}\;-a_n\notin \mathscr{B}_n. $
	$H_B(n,k)$ has vertices as subsets of cardinality $1$ to $n$. $|V(G)|$ is the total number of sets of cardinality $1$ to $n$.\\	Let $N_i$ be the number of subsets of  $\mathscr{B}_n$ of cardinality $i$, where $1\le i\le n$.\\
	$N_1=1\binom{n}{1}=2^0\binom{n}{1}, N_2=2\binom{n}{2}, N_3=2^2\binom{n}{3},\dotsc,
	N_n=2^{n-1}\binom{n}{n}$.\\
	Thus, $|V(G)|=\sum\limits_{i=1}^n N_i=2^0\binom{n}{1}+2\binom{n}{2}+2^2\binom{n}{3}+\dotsb 2^{n-1}\binom{n}{n}= \dfrac{3^n-1}{2} $, by  binomial theorem.
\end{proof}
\begin{definition}

	A set $S\subseteq V(G)$ is an independent set of $G$ if no two vertices of $S$ are adjacent in $G$.
	The independence number of $G$, denoted by $\alpha$, is the number of vertices in a maximum independent set of $G$. $S$ is maximum if it has the maximum cardinality among all independent subsets of $G$.
\end{definition}
\begin{definition}
A set $S$ is a dominating set \cite{haynes2022domination} if for every vertex $u \in V-S$, there exists $v\in S$ such that the edge $uv \in E$. The domination number of $G$, denoted by $\gamma(G)$, is the minimum cardinality of a dominating set of $G$.
\end{definition}

\begin{theorem}
	For $G = H_B(n,k)$, the vertex independence number,
	
	$\alpha(G) = \sum\limits_{i=1}^n 2^{i-1}\binom{n}{i}-\binom{n}{k}$, for $n\ge 2$.
\end{theorem}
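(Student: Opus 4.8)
The plan is to exploit the bipartite structure. Since $G=H_B(n,k)$ is bipartite with parts $V_1$ and $V_2$, each part is itself an independent set. Here $|V_1|=\binom{n}{k}$, and by the order theorem $|V(G)|=\sum_{i=1}^{n}2^{i-1}\binom{n}{i}=\frac{3^n-1}{2}$, so $|V_2|=|V(G)|-\binom{n}{k}=\sum_{i=1}^{n}2^{i-1}\binom{n}{i}-\binom{n}{k}$, which is exactly the claimed value. Thus the independence of $V_2$ already yields the lower bound $\alpha(G)\ge |V_2|$, and the entire task reduces to proving the matching upper bound $\alpha(G)\le |V_2|$.

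For the upper bound I would argue directly, without invoking König's theorem as a black box. Let $S$ be any independent set and set $S_1=S\cap V_1$, $S_2=S\cap V_2$. Since $G$ is bipartite, $N(S_1)\subseteq V_2$, and independence forces $S_2\cap N(S_1)=\varnothing$, whence $S_2\subseteq V_2\setminus N(S_1)$ and $|S_2|\le |V_2|-|N(S_1)|$. Therefore $|S|=|S_1|+|S_2|\le |V_2|+\bigl(|S_1|-|N(S_1)|\bigr)$. Consequently it suffices to verify Hall's condition $|N(S_1)|\ge |S_1|$ for every $S_1\subseteq V_1$; equivalently, it suffices to produce a matching saturating $V_1$, since the partners of $S_1$ under such a matching are $|S_1|$ distinct vertices of $N(S_1)$.

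The heart of the proof is therefore the construction of an injection $\psi:V_1\to V_2$ with each $X\in V_1$ adjacent to $\psi(X)$. When $k\ge 2$ this is transparent: for $X=\{x_{i_1},\dots,x_{i_k}\}$ with $i_1<\dots<i_k$, let $\psi(X)=\{-x_{i_1},x_{i_2},\dots,x_{i_k}\}$ be obtained by negating the smallest element. Because $k\ge 2$ forces $i_1<i_k\le n$, the element $x_{i_1}$ is neither the largest entry nor equal to $x_n$, so $\psi(X)\in\phi(\mathscr{B}_n)$; it lies in $V_2$ since it contains a negative entry and is hence not an all-positive $k$-set; and $\psi(X)^\dag=X$ makes $X$ adjacent to $\psi(X)$. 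Injectivity is immediate, as $X$ is recovered from $\psi(X)^\dag$. The case $k=1$ must be handled separately, since the only set $Y$ with $Y^\dag=\{x_i\}$ is $\{x_i\}$ itself, which lies in $V_1$; here I would send $\{x_i\}\mapsto\{x_i,x_n\}$ for $i<n$ and $\{x_n\}\mapsto\{-x_1,x_n\}$ (this uses $n\ge 2$). Each image is a $2$-element vertex of $V_2$ whose $\dag$-set contains the required absolute value, so the adjacency $X\subset \psi(X)^\dag$ holds, and the negative entry in the last image keeps $\psi$ injective.

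The only genuine obstacle is this explicit construction of the saturating matching together with the bookkeeping in the $k=1$ boundary case; once $\psi$ is in hand, the inequality $|S_1|-|N(S_1)|\le 0$ is automatic for every $S_1$, and combining it with the lower bound gives $\alpha(G)=\sum_{i=1}^{n}2^{i-1}\binom{n}{i}-\binom{n}{k}$.
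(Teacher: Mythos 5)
Your proof is correct, and it is substantially more complete than the paper's. The paper's entire argument is the one-line assertion that $V_2$ ``forms a maximum independent set'': it establishes only the easy half (that $V_2$ is independent, giving $\alpha(G)\ge|V_2|$) and never justifies maximality. You supply exactly the missing half: the reduction of the upper bound to Hall's condition on subsets of $V_1$, and then an explicit injection $\psi$ realizing a matching that saturates $V_1$, which is the genuinely nontrivial content here. Your construction checks out against the paper's conventions: for $k\ge 2$ the image $\{-x_{i_1},x_{i_2},\dots,x_{i_k}\}$ lies in $\phi(\mathscr{B}_n)$ because its largest-modulus element $x_{i_k}$ is positive and $-x_{i_1}\in\mathscr{B}_n$ since $i_1\le n-1$, and the adjacency via $\psi(X)^\dag=X$ is sanctioned by the paper's own reading of $\subset$ as non-strict (in the $H_B(3,2)$ example, $\{1,2\}$ is declared adjacent to $\{-1,2\}$ precisely because $\{1,2\}\subset\{-1,2\}^\dag=\{1,2\}$) --- this is the one point where your argument leans on a convention, and the paper resolves it in your favor; the $k=1$ patch is also sound and correctly uses $n\ge 2$. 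What the paper's terseness buys is brevity; what your argument buys is an actual proof of the upper bound, without which the stated equality is only an inequality. Equivalently, your matching shows the maximum matching has size $|V_1|=\binom{n}{k}$, so the K\"onig--Egerv\'ary theorem gives $\alpha(G)=|V(G)|-\binom{n}{k}$ at once, consistent with the covering-number corollary that follows in the paper.
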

\begin{proof}
	By the construction of $H_B(n,k)=V_1\cup V_2$, $|V_2|$ forms a maximum independent set and 
	$|V_2|= |\phi(\mathscr{B}_n)|-\binom{n}{k}$.\\
	Therefore, $\alpha(G)=\sum\limits_{i=1}^n 2^{i-1}\binom{n}{i}-\binom{n}{k}$.
\end{proof}

\begin{corollary}
	For   $G=H_B(n,k)$, the vertex covering number, $\beta(G) = \binom{n}{k}$.
\end{corollary}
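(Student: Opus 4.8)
The plan is to obtain $\beta(G)$ directly from the Gallai identity $\alpha(G)+\beta(G)=|V(G)|$, which holds for any graph $G$ having no isolated vertices. Both $\alpha(G)$ and the order $|V(G)|$ have already been determined in the preceding theorems, so once the hypothesis of the identity is verified, the result will follow by a single subtraction.

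The first step is to confirm that $H_B(n,k)$ has no isolated vertices. For any $X\in V_1$, since $1\le k<n$ we may adjoin one further element of $\mathscr{B}_n^+$ to $X$ to obtain a $(k+1)$-element subset $Y$ of $\mathscr{B}_n^+$; then $Y\in V_2$ and $X\subset Y^\dag=Y$, so $X$ has a neighbour. For any $Y\in V_2$, put $r=|Y^\dag|$ (note $r=|Y|$, since the strict ordering of absolute values makes them distinct). If $r\ge k$, choose any $k$-subset $X$ of $Y^\dag$, giving $X\subset Y^\dag$; if $r<k$, extend $Y^\dag$ to a $k$-subset $X$ of $\mathscr{B}_n^+$, which is possible because $|\mathscr{B}_n^+|=n>k$, giving $Y^\dag\subset X$. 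In either case $X\in V_1$ is a neighbour of $Y$. Hence every vertex has degree at least one.

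Next I would assemble the two ingredients. The theorem on the order shows, via the binomial computation there, that $|V(G)|=\sum_{i=1}^n 2^{i-1}\binom{n}{i}=\tfrac{3^n-1}{2}$, while the theorem on the independence number gives $\alpha(G)=\sum_{i=1}^n 2^{i-1}\binom{n}{i}-\binom{n}{k}$. Substituting these into the Gallai identity yields
\[
\beta(G)=|V(G)|-\alpha(G)=\sum_{i=1}^n 2^{i-1}\binom{n}{i}-\left(\sum_{i=1}^n 2^{i-1}\binom{n}{i}-\binom{n}{k}\right)=\binom{n}{k}.
\]

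The only point genuinely requiring care is the absence of isolated vertices, since the Gallai identity fails otherwise; everything after that is bookkeeping. An alternative, fully self-contained route would be to exhibit $V_1$ itself as a vertex cover (every edge of the bipartite graph has one endpoint in $V_1$) and then argue its minimality by pairing it against the maximum independent set $V_2$, but invoking the Gallai identity is shorter and reuses the two results already in hand.
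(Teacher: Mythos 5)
Your proposal is correct and follows essentially the same route as the paper: apply the Gallai identity $\alpha(G)+\beta(G)=|V(G)|$ and subtract the already-computed independence number from the order. Your extra verification that $H_B(n,k)$ has no isolated vertices is harmless but not actually required here --- the identity $\alpha+\beta=|V|$ holds for every graph, since complements of independent sets are vertex covers and vice versa regardless of isolated vertices (it is the matching/edge-cover version of Gallai's theorem that needs that hypothesis).
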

\begin{proof}
	As $\alpha(G)+\beta(G)=|G|$, we get $\beta(G)=\binom{n}{k}$.								
\end{proof}
\begin{theorem}
	The domination number, $\gamma(H_B(n,k)) = \binom{n}{k} $, for all $n\ge 2$ .
\end{theorem}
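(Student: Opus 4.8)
The plan is to prove the two inequalities $\gamma(G)\le\binom{n}{k}$ and $\gamma(G)\ge\binom{n}{k}$ separately, the first being routine and the second carrying the real content. For the upper bound I would show that $V_1$ is itself a dominating set, so that $\gamma(G)\le|V_1|=\binom{n}{k}$. Since $G$ is bipartite with $V_2$ independent, it suffices to check that every $Y\in V_2$ has a neighbour in $V_1$, i.e.\ that $Y^\dag$ is comparable under inclusion to some $k$-subset of $\mathscr{B}_n^+$. If $|Y^\dag|\ge k$ pick any $k$-subset $X\subseteq Y^\dag$; if $|Y^\dag|<k$ pick any $k$-superset $X\supseteq Y^\dag$ inside $\mathscr{B}_n^+$ (possible since $k<n$); and in the boundary case $|Y^\dag|=k$ the set $X=Y^\dag$ itself lies in $V_1$ and dominates $Y$. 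In every case such an $X\in V_1$ dominates $Y$, so $V_1$ dominates $G$.

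For the lower bound when $k\ge 2$, I would exhibit a $2$-packing (a set of vertices with pairwise disjoint closed neighbourhoods) of size $\binom{n}{k}$; since any dominating set must contain a vertex of $N[w]$ for each packed vertex $w$, this forces $\gamma(G)\ge\binom{n}{k}$. For each $X=\{x_{i_1}<\cdots<x_{i_k}\}\in V_1$ let $Y_X=\{-x_{i_1},x_{i_2},\ldots,x_{i_k}\}$ be obtained by flipping the sign of the smallest entry. Because $k\ge 2$ the largest entry stays positive, so $Y_X\in\phi(\mathscr{B}_n)$, and $Y_X$ carries a negative sign, whence $Y_X\in V_2$ with $Y_X^\dag=X$. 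The crucial point is that $\deg(Y_X)=1$: any $W\in V_1$ adjacent to $Y_X$ satisfies $W\subseteq X$ or $X\subseteq W$, and since $|W|=|X|=k$ this forces $W=X$, while $V_2$ being independent rules out any further neighbour. Hence $N[Y_X]=\{Y_X,X\}$, and as the sets $X$ are distinct these closed neighbourhoods are pairwise disjoint, giving a packing of size $\binom{n}{k}$.

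The case $k=1$ is the main obstacle, since here every $Y\in V_2$ has degree $|Y^\dag|\ge 2$, so there are no pendant vertices and the $2$-packing number is strictly smaller than $n$ (already $H_B(2,1)\cong K_{2,2}$ has packing number $1$). Instead I would argue directly. Let $S$ be any dominating set, put $A=\{i:\{x_i\}\in S\}$ and $m=n-|A|$. First, every $Y\in V_2$ whose support $Y^\dag$ avoids $\{x_i:i\in A\}$ has all of its $V_1$-neighbours outside $S$ and no $V_2$-neighbour, so $Y$ must itself lie in $S$; counting the signed subsets of size $\ge 2$ supported on the $m$ uncovered indices gives $|S\cap V_2|\ge\sum_{j=2}^{m}2^{j-1}\binom{m}{j}=\tfrac{3^m-2m-1}{2}$. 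Secondly, if $m\ge 1$ each uncovered singleton $\{x_i\}$ (with $i\notin A$) lies in $V_1\setminus S$ and can be dominated only from $V_2$, so $|S\cap V_2|\ge 1$. Combining with $|S\cap V_1|\ge|A|=n-m$, the estimate $|S|\ge(n-m)+\tfrac{3^m-2m-1}{2}$ handles $m\ge 2$ (using $3^m\ge 4m+1$, a one-line induction), while $m=0$ gives $|S|\ge n$ outright and $m=1$ gives $|S|\ge(n-1)+1=n$. Thus $\gamma(G)\ge n=\binom{n}{1}$ in every case, and together with the upper bound this yields $\gamma(H_B(n,k))=\binom{n}{k}$ for all $n\ge 2$ and $1\le k<n$.
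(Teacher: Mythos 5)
Your proof is correct, and it does substantially more than the paper's own argument. The paper's proof consists only of the observation that $V_1$ is a dominating set (your upper bound, which you verify more carefully by checking comparability of $Y^\dag$ with some $k$-set in all three size regimes) followed by the bare assertion that $V_1$ is ``the smallest dominating set,'' with no justification of the lower bound $\gamma(G)\ge\binom{n}{k}$. You supply that missing half: for $k\ge 2$ the pendant vertices $Y_X$ with $Y_X^\dag=X$ form a $2$-packing of size $\binom{n}{k}$ whose closed neighbourhoods $\{Y_X,X\}$ are pairwise disjoint, which immediately forces the bound; and for $k=1$, where no pendant vertices exist and the packing bound genuinely fails (as your $K_{2,2}$ example shows), you give a direct count showing any dominating set missing $m\ge 1$ singletons must absorb enough of $V_2$ to compensate. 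Both steps are sound — I checked that $Y_X\in\phi(\mathscr{B}_n)$ and $\deg(Y_X)=1$ under the paper's (non-strict) containment convention, and that the estimate $3^m\ge 4m+1$ closes the $k=1$ case. The only cost of your route is length; what it buys is an actual proof of optimality, including the delicate $k=1$ case that the packing argument cannot reach and that the paper does not address at all.
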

\begin{proof}

	In $H_B(n,k)$, no two vertices in $V_1$ are adjacent, and every vertex in it is adjacent to some other vertex in $V_2$. Thus, $V_1$ forms a dominating set for $H_B(n,k)$.\\$V_1$, being the smallest dominating set,we get  $\gamma(H_B(n,k)) = \binom{n}{k} $, for all $n\ge 2$.
\end{proof}
\begin{theorem}

	The size of $G=H_B(n,k)$ is $|E(G)| = \binom{n}{k} \left(\dfrac{3^k-3}{2}+2^{k-1}(3^{n-k}-1)\right)$.
	
\end{theorem}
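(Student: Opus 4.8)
The plan is to exploit the bipartite structure directly: since every edge of $G$ has exactly one endpoint in $V_1$, we have $|E(G)| = \sum_{X \in V_1} \deg(X)$. First I would argue that $H_B(n,k)$ is regular on the $V_1$ side, i.e. that every $k$-vertex $X$ has the same degree. This is because the number of neighbours of $X$ will turn out to depend only on $|X| = k$ and on $n$, not on which particular elements $X$ contains. Granting this, $|E(G)| = \binom{n}{k}\,\deg(X)$ for any fixed $X \in V_1$, and the whole problem reduces to computing one degree.

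The key counting tool is the observation already implicit in the order computation: for a fixed nonempty $T \subseteq \mathscr{B}_n^+$ with $|T| = t$, the number of $Y \in \phi(\mathscr{B}_n)$ with $Y^\dag = T$ is exactly $2^{t-1}$, since the element of largest absolute value is forced to carry a positive sign while each of the remaining $t-1$ elements may independently take either sign (note that if $x_n \in T$ it is automatically the forced-positive largest element, so the missing $-x_n$ never causes trouble). I would then classify the neighbours $Y$ of a fixed $X$ by the value $T = Y^\dag$. Adjacency requires $X \subseteq T$ or $T \subseteq X$ (here $\subset$ is used in the sense of $\subseteq$, as the worked example $\{1,2\}\sim\{-1,2\}$ with $\{-1,2\}^\dag=\{1,2\}$ shows), so $T$ ranges over the nonempty subsets of $X$ together with the supersets of $X$ inside $\mathscr{B}_n^+$, the two families overlapping only in $T = X$.

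Next I would sum $2^{t-1}$ over these admissible $T$, grouped by size. For proper subsets $T \subsetneq X$ (sizes $1 \le t \le k-1$) there are $\binom{k}{t}$ choices of $T$, each contributing $2^{t-1}$, and every resulting $Y$ lies in $V_2$; for proper supersets $T \supsetneq X$ (sizes $k+1 \le t \le n$) there are $\binom{n-k}{t-k}$ choices, each contributing $2^{t-1}$, again all in $V_2$. The one delicate level is $T = X$ itself: of the $2^{k-1}$ candidate sets $Y$ with $Y^\dag = X$, exactly one is the all-positive copy $X$, which lies in $V_1$ rather than $V_2$, so this level contributes only $2^{k-1}-1$ edges. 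Applying the identity $\sum_{t=0}^{m}\binom{m}{t}2^{t} = 3^{m}$, the subset-plus-$X$ part collapses to $\tfrac{3^k - 3}{2}$ and the superset part (after substituting $s = t-k$) collapses to $2^{k-1}(3^{n-k}-1)$, giving $\deg(X) = \tfrac{3^k-3}{2} + 2^{k-1}(3^{n-k}-1)$; multiplying by $\binom{n}{k}$ yields the claim.

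The main obstacle I anticipate is the bookkeeping at the boundary level $T = X$. One must both recognise that sign-variants of $X$ sharing its absolute-value set are genuine neighbours, and subtract precisely the single all-positive copy belonging to $V_1$. Getting this $-1$ right is exactly what converts the naive $\tfrac{3^k-1}{2}$ into the stated $\tfrac{3^k-3}{2}$, so I would isolate the count ``number of $Y \in \phi(\mathscr{B}_n)$ with $Y^\dag = T$ equals $2^{|T|-1}$'' as a small preliminary lemma and treat the $T=X$ case explicitly before collapsing the two binomial sums.
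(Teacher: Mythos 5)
Your proposal is correct and follows essentially the same route as the paper: fix a $k$-vertex in $V_1$, count its neighbours level by level via the $2^{t-1}$ sign-variant count, subtract the single all-positive copy at the $T=X$ level to get $2^{k-1}-1$, collapse the two binomial sums, and multiply by $\binom{n}{k}$. Your explicit isolation of the ``$2^{|T|-1}$ sets with a given absolute-value set'' lemma and the $-1$ bookkeeping is just a more careful write-up of the paper's own argument.
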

\begin{proof}
	Let $u$ be a $k$-vertex in $V_1$.\\
	The number of $1$-vertices in $V_2$ adjacent to $u$ is $2^0\binom{k}{1}$.\\
The number of $2$-vertices in  $V_2$ adjacent to $u$ is $2^1\binom{k}{2}$.\\
The number of $3$-vertices in $V_2$ adjacent to $u$ is $2^2\binom{k}{3}$.\\
	\vdots\\
The number of  $k$-vertices in $V_2$ adjacent to $u$ is $2^{k-1}\binom{k}{k}-1$.\\
The number of $k+1$-vertices in $V_2$ adjacent to $u$ is $2^k\binom{n-k}{1}$.\\
The number of $k+2$-vertices in $V_2$ adjacent to $u$ is $2^{k+1}\binom{n-k}{2}$.\\
	\vdots\\
	The number of $n$-vertices in $V_2$ adjacent to $u$ is $2^{n-1}\binom{n-k}{n-k}$.\\
	The degree of $u$, \begin{align*} d(u)&=2^0\binom{k}{1}+2^1\binom{k}{2}+\dotsb +2^{k-1}\binom{k}{k}-1+2^k\binom{n-k}{1}\\&
	\quad +2^{k+1}\binom{n-k}{2}+\dotsb +2^{n-1}\binom {n-k}{n-k}.\\
	&= \left(\dfrac{3^k-3}{2}+2^{k-1}(3^{n-k}-1)\right).\end{align*}\\
	Since every vertex in $V_1$ is of the same degree and $d(u)$ is the maximum,
	we get,  $|E(G)| = \binom{n}{k} \left(\dfrac{3^k-3}{2}+2^{k-1}(3^{n-k}-1)\right)$.
	
\end{proof}
\begin{theorem}
	The vertex connectivity of $G=H_B(n,k)$ when $k>1$ is $\kappa(G)=1.$
\end{theorem}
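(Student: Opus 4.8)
The plan is to establish the two inequalities $\kappa(G)\ge 1$ and $\kappa(G)\le 1$ separately: the first by showing that $G$ is connected, and the second by exhibiting a vertex of degree $1$ and invoking the standard bound $\kappa(G)\le\delta(G)$, where $\delta(G)$ denotes the minimum degree. Since a connected graph on at least two vertices has no isolated vertices, connectedness already gives $\delta(G)\ge 1$ and $\kappa(G)\ge 1$, and $|V(G)|=\tfrac{3^n-1}{2}\ge 4$ for $n\ge 2$. So the whole statement reduces to proving that $G$ is connected and that some vertex has degree exactly $1$.

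First I would prove connectedness by routing everything through $V_1$. The key observation is that the full-support vertex $Y_0=\mathscr{B}_n^+$ lies in $V_2$ (it is a non-empty subset of $\mathscr{B}_n$ with positive largest element, and it is not a $k$-set since $k<n$), and that $Y_0^\dag=\mathscr{B}_n^+=\{x_1,\dots,x_n\}$. Hence for every $X\in V_1$ we have $X\subset Y_0^\dag$, so $Y_0$ is adjacent to all of $V_1$; in particular the whole of $V_1\cup\{Y_0\}$ lies in one component. It then suffices to check that every $Y\in V_2$ has at least one neighbour in $V_1$. Writing $r=|Y^\dag|$ with $1\le r\le n$, when $r\le k$ any $k$-set $X$ with $Y^\dag\subset X$ works and such $X$ exists because $r\le k\le n$, while when $r\ge k$ any $k$-subset $X\subset Y^\dag$ works; in either case at least one admissible $X\in V_1$ exists. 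Thus every $V_2$-vertex is joined to $V_1$, which is connected via $Y_0$, and $G$ is connected.

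Next I would exhibit a pendant vertex, and this is where the hypothesis $k>1$ is essential. The idea is to take a vertex $Y\in V_2$ with $|Y^\dag|=k$: then a $k$-set $X\in V_1$ satisfies $X\subset Y^\dag$ or $Y^\dag\subset X$ only when $X=Y^\dag$, so $Y$ has the single neighbour $Y^\dag$ and degree $1$. For such a $Y$ to belong to $V_2$ it must carry at least one negative sign, and to belong to $\phi(\mathscr{B}_n)$ its largest-magnitude element must be positive. Concretely I would choose $Y^\dag=\{x_1,\dots,x_k\}$ and set $Y=\{-x_1,x_2,\dots,x_k\}$: the largest-magnitude entry is $x_k>0$, so $Y\in\phi(\mathscr{B}_n)$, while the presence of $-x_1$ shows $Y\notin V_1$, hence $Y\in V_2$. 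This forces $\delta(G)=1$ and therefore $\kappa(G)\le\delta(G)=1$.

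The only delicate point---and the reason the statement restricts to $k>1$---is guaranteeing that the degree-one candidate actually lies in $\phi(\mathscr{B}_n)\setminus V_1$. Because $\phi(\mathscr{B}_n)$ forbids a negative largest element, a singleton $\{-x_i\}$ is not a vertex, so for $k=1$ the construction above collapses and the argument breaks down; for $k\ge 2$ the set $Y^\dag$ has at least two elements, leaving a non-maximal element that can safely be negated while keeping the top element positive, which is exactly what makes the pendant vertex available. Combining the two inequalities yields $\kappa(G)=1$.
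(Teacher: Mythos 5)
Your proof is correct and rests on the same key fact as the paper's: for $k>1$ a $k$-vertex in $V_2$ (e.g.\ $\{-x_1,x_2,\dots,x_k\}$) has the single neighbour $Y^\dag\in V_1$, so deleting that neighbour disconnects the graph and $\kappa(G)\le 1$. The only difference is that you also verify connectedness (via the all-positive $n$-vertex adjacent to every member of $V_1$) to secure $\kappa(G)\ge 1$, a step the paper leaves implicit; this is a worthwhile addition rather than a different method.
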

\begin{proof}
	As every $k$-vertex, $v$ in $V_2$, is of degree $1$, when a $k$-vertex $u$ in $V_1$ adjacent to $v$ is removed, $v$ becomes isolated. Accordingly, we get  $\kappa(G)=1$.
\end{proof}
\begin{theorem}
	The edge connectivity of $G=H_B(n,k)$ when $k>1$is $\lambda(G)=1$.
\end{theorem}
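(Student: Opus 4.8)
The plan is to mirror the vertex-connectivity argument from the preceding theorem and close the estimate with Whitney's chain of inequalities $\kappa(G) \le \lambda(G) \le \delta(G)$. Everything hinges on exhibiting a vertex of degree $1$, and such a vertex is already available from the earlier proof: each $k$-vertex $v$ in $V_2$ has degree exactly $1$. First I would re-establish this cleanly. If $Y \in V_2$ is a $k$-element subset of $\mathscr{B}_n$ with at least one negative entry, then $Y^\dag$ is a $k$-element subset of $\mathscr{B}_n^+$, so $Y^\dag \in V_1$. For $Y$ to be adjacent to some $X \in V_1$ we need $X \subset Y^\dag$ or $Y^\dag \subset X$; but $X$ and $Y^\dag$ each have exactly $k$ elements, so the only possibility is $X = Y^\dag$. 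Hence $Y$ is adjacent to the single vertex $Y^\dag$, giving $\deg(Y)=1$ and therefore $\delta(G)=1$.

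Next I would record that $G$ is connected. This is immediate from the previous theorem, which gives $\kappa(G)=1>0$; a graph with positive vertex connectivity is connected, so in particular $\lambda(G)\ge 1$. Combining this lower bound with the universal upper bound $\lambda(G)\le\delta(G)=1$ forces $\lambda(G)=1$.

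If one prefers to avoid quoting Whitney's inequality, I would give the same conclusion directly. Let $v$ be a $k$-vertex of $V_2$ and let $e$ be the unique edge joining $v$ to $v^\dag$. Deleting $e$ isolates $v$, so a single edge deletion disconnects $G$ and $\lambda(G)\le 1$; since $G$ is connected, $\lambda(G)\ge 1$, whence $\lambda(G)=1$.

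I do not anticipate a real obstacle here, as the statement is essentially a consequence of the degree-$1$ phenomenon already isolated for vertex connectivity. The only point meriting care is the hypothesis $k>1$: it is precisely what guarantees that $V_2$ contains a $k$-vertex carrying a negative entry, since every $1$-element subset is forced to be positive by the condition $u_t\in\mathbb{R}^+$ and hence lies in $V_1$ when $k=1$. I would therefore note explicitly that for $k>1$ a $k$-vertex such as $\{-x_1,x_2,\dots,x_{k-1},x_k\}$ belongs to $V_2$ and has degree $1$, which supplies the degree-$1$ witness used above.
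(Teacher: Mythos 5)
Your proposal is correct and follows essentially the same route as the paper: both arguments rest on the observation that any $k$-vertex in $V_2$ has degree $1$, so deleting its unique incident edge disconnects the graph. You merely supply the details the paper leaves implicit (why such a vertex exists when $k>1$, why it has degree exactly $1$, and why connectedness gives $\lambda(G)\ge 1$), which strengthens rather than changes the argument.
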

\begin{proof}
	Let $v$ be any $k$-vertex in $V_2$. Since $d(v)=1$, the graph becomes disconnected when the only edge adjacent to it is removed. Consequently, we get  $\lambda(G)=1$.
\end{proof}

\begin{theorem}
	The Girth of $G=H_B(n,k)$ when $k>1$ is Girth(G)=4.
\end{theorem}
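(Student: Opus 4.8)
The plan is to establish the two bounds girth$(G) \ge 4$ and girth$(G) \le 4$ separately. The lower bound is immediate from the bipartite structure: since $G = H_B(n,k)$ is bipartite with parts $V_1$ and $V_2$, every cycle alternates between the two parts and therefore has even length, so $G$ contains no triangle. As $G$ is a simple graph, any cycle has length at least $3$, and combining this with the parity obstruction forces every cycle to have length at least $4$. Hence it remains only to exhibit a single $4$-cycle, which will simultaneously show that $G$ has a cycle and that the bound is attained.

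For the upper bound I would construct an explicit $4$-cycle of the form $X_1 \sim Y_1 \sim X_2 \sim Y_2 \sim X_1$ with $X_1, X_2 \in V_1$ and $Y_1, Y_2 \in V_2$. First fix any $k$-subset $X_1 = \{a_1, a_2, \dots, a_k\} \subset \mathscr{B}_n^+$; since $k < n$ there is an element $c \in \mathscr{B}_n^+ \setminus X_1$. Put $X_2 = (X_1 \setminus \{a_1\}) \cup \{c\}$, which is again a $k$-subset of $\mathscr{B}_n^+$ and satisfies $X_2 \neq X_1$ because $c \notin X_1$; thus both $X_1, X_2 \in V_1$. The two $V_2$-vertices will be chosen among the sets $Y$ with $Y^\dag = X_1 \cup \{c\}$, that is, the sets obtained by attaching signs to the $k+1$ distinct positive reals in $X_1 \cup \{c\}$.

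The key point is a short count showing there are at least two admissible sign patterns. Writing the elements of $X_1 \cup \{c\}$ as $b_1 < b_2 < \dots < b_{k+1}$, the defining condition of $\phi(\mathscr{B}_n)$ forces the largest entry $b_{k+1}$ to be positive, while each of $b_1, \dots, b_k$ is strictly smaller than $b_{k+1} \le x_n$ and hence admits both signs in $\mathscr{B}_n$. This yields $2^k$ distinct vertices $Y$, and under the hypothesis $k > 1$ we certainly have $2^k \ge 2$, so I may select two distinct such vertices, for instance $Y_1 = \{b_1, b_2, \dots, b_{k+1}\}$ and $Y_2 = \{-b_1, b_2, \dots, b_{k+1}\}$, both lying in $V_2$ since they have cardinality $k+1 \neq k$.

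Finally I would verify the four adjacencies. Since $Y_1^\dag = Y_2^\dag = X_1 \cup \{c\}$, we have $X_1 \subset Y_i^\dag$ and $X_2 \subset Y_i^\dag$ for $i = 1, 2$, so each of $X_1, X_2$ is adjacent to each of $Y_1, Y_2$ by the definition of $H_B(n,k)$. The four vertices are pairwise distinct and alternate between $V_1$ and $V_2$, so $X_1 Y_1 X_2 Y_2 X_1$ is a genuine $4$-cycle and the girth of $G$ equals $4$. The computation is routine; the only step demanding care is the verification that the sign constraint built into $\phi(\mathscr{B}_n)$ (largest absolute value positive, all absolute values distinct) still leaves at least two valid completions $Y$, which is precisely where the availability of the bound $2^k \ge 2$ enters.
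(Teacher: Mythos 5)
Your proof is correct and follows essentially the same strategy as the paper: use bipartiteness to rule out odd cycles (hence girth $\geq 4$) and then exhibit an explicit $4$-cycle. The only difference is cosmetic --- the paper's $4$-cycle uses a $1$-vertex and an $n$-vertex of $V_2$ as the two common neighbours of two $k$-sets, whereas you use two sign-variants $Y_1, Y_2$ of a single $(k+1)$-element support $X_1 \cup \{c\}$; both constructions are valid under the stated hypotheses.
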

\begin{proof}

	Let $u$ be a $k$-vertex in $V_1$. Then, $u$ is adjacent to a $1$-vertex $x$ in $V_2$, and  $x$ is adjacent to another $k$-vertex $y$ in $V_1$. The vertex $y$ is adjacent to an $n$-vertex $v$ in $V_2$ and $v$ is adjacent to $u$. Thus, there is a cycle $u-x-y-v-u$ of length $4$. Since any cycle in a bipartite graph is of even length, eventually, we get  Girth(G)=4.

\end{proof}
\begin{theorem}
	The circuit rank of $G=H_B(n,k)$ is \\
	$\binom{n}{k}\Big(2^0\binom{k}{1}+2^1\binom{k}{2}+\dotsb +2^{k-1}\binom{k}{k}-1+2^k\binom{n-k}{1}+2^{k+1}\binom{n-k}{2}+\dotsb +2^{n-1}\binom {n-k}{n-k}\Big )\\-\sum\limits_{i=1}^n 2^{i-1}\binom{n}{i}+1$.
\end{theorem}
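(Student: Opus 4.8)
The plan is to invoke the standard identity that the circuit rank (cyclomatic number) of a \emph{connected} graph equals $|E(G)|-|V(G)|+1$. Because both $|E(G)|$ and $|V(G)|$ were already determined in the preceding theorems, the statement will reduce to a substitution once I verify that $G=H_B(n,k)$ is connected. Observe that the bracketed factor multiplying $\binom{n}{k}$ in the claimed formula is exactly the degree $d(u)$ of a vertex $u\in V_1$ computed in the size theorem, so $\binom{n}{k}$ times that factor equals $|E(G)|$; and a one-line binomial-theorem check gives $\sum_{i=1}^{n}2^{i-1}\binom{n}{i}=\frac{3^{n}-1}{2}=|V(G)|$, the order found earlier. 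Thus the displayed formula is precisely $|E(G)|-|V(G)|+1$, and the whole task collapses to the connectivity claim.

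To prove connectivity I would argue directly, which has the advantage of covering every $k$ with $1\le k<n$ (rather than only $k>1$, as in the vertex-connectivity theorem). Consider the all-positive $n$-vertex $w=\{x_1,x_2,\dots,x_n\}$; since $k<n$, $w\in V_2$, and $w^\dag=\mathscr{B}_n^{+}$ contains every $k$-subset, so $X\subset w^\dag$ for each $X\in V_1$ and hence every vertex of $V_1$ is adjacent to $w$. Therefore $V_1\cup\{w\}$ lies in a single component. Next I would show that each $Y\in V_2$ has a neighbour in $V_1$ by splitting on $r=|Y^\dag|$: if $r<k$, extend $Y^\dag$ to a $k$-subset $X$ of $\mathscr{B}_n^{+}$, so $Y^\dag\subset X$; if $r>k$, take any $k$-subset $X\subset Y^\dag$; and if $r=k$, set $X=Y^\dag\in V_1$. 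In every case $Y$ is adjacent to $X\in V_1$, so $Y$ joins the component of $V_1$, and $G$ is connected.

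With connectivity established, the circuit rank is $|E(G)|-|V(G)|+1$; inserting the expanded size from the size theorem and $|V(G)|=\sum_{i=1}^{n}2^{i-1}\binom{n}{i}$ from the order theorem reproduces the stated expression verbatim. I do not anticipate a real obstacle here: the formula is a transcription of $|E|-|V|+1$, and the only part needing an argument rather than a citation is the connectivity of $G$, which the universal neighbour $w$ together with the three-case adjacency check settles immediately.
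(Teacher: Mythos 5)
Your proposal is correct and follows essentially the same route as the paper: both reduce the statement to the cyclomatic-number formula $|E(G)|-|V(G)|+c$ with $c=1$ and substitute the order and size computed earlier (the paper's proof writes the formula with $n$ and $m$ transposed, but clearly intends $m-n+c$). The one substantive difference is that the paper simply asserts $c=1$, whereas you actually prove connectivity via the universal neighbour $w=\{x_1,\dots,x_n\}$ and the three-case check that every $Y\in V_2$ meets $V_1$; this fills a genuine gap in the paper's argument and does so for all $1\le k<n$.
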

\begin{proof}

	The circuit rank, which is also called the cyclomatic number of a graph, is $n-m+c$, where n, m, and c are the order, size, and the number of connected components, respectively. As $c=1$ and n and m are already determined for $H_B(n,k)$, the result follows.
\end{proof}
\begin{remark}
   
    Cyclomatic  number is related to a recently defined graph invariant called the Omega invariant, which allows some combinatorial and graph theoretical properties to be calculated. The Omega invariant is an additive number defined for a given degree sequence with
    \begin{equation}\Omega(G)=\sum\limits_{i=1}^\Delta a_i(d_i-2).\end{equation}   It is shown that $\Omega(G)=2(m-n)$, and therefore it is always an even number. For further properties of the Omega invariant, see \cite{cangul18}.

\end{remark}
\begin{theorem}

	The degree of every vertex in $G=H_B(n,k)$ and the number of vertices having a specific degree are determined. The degree sequence is obtained by arranging the sequence
	$\Big\{d_{V_2}(1)^{N_{V_2}(1)}, d_{V_2}(2)^{N_{V_2}(2)}\dots ,d_{V_2}(k-1)^{N_{V_2}(k-1)},d_{V_1}(k)^{N_{V_1}(k)},\\d_{V_2}(k)^{N_{V_2}(k)}$,	$d_{V_2}(k+1)^{N_{V_2}(k+1)},\dots d_{V_2}(n)^{N_{V_2}(n)} \Big\}$of degrees with corresponding multiplicities as a monotonic non-increasing sequence.
\end{theorem}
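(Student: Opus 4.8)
The plan is to read off, for each admissible cardinality $r\in\{1,\dots,n\}$, the common degree $d_{V_2}(r)$ shared by all $r$-vertices lying in $V_2$ together with the number $N_{V_2}(r)$ of such vertices; the part $V_1$ contributes only $k$-vertices, whose common degree $d_{V_1}(k)$ was already computed in the size theorem and whose count is $N_{V_1}(k)=\binom{n}{k}$. Once every degree value and its multiplicity is known, the degree sequence is simply the resulting multiset written in non-increasing order. A preliminary remark I would make is that the neighbors of a vertex $Y\in V_2$ depend only on $Y^\dag$ (not on the sign pattern of $Y$), so all $r$-vertices of $V_2$ with the same $r$ indeed share a single degree value, justifying the notation $d_{V_2}(r)$.

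First I would fix an $r$-vertex $Y\in V_2$ and count its neighbors, all of which lie in $V_1$ and are therefore $k$-subsets $X$ of $\mathscr{B}_n^{+}$. Since $|X|=k$ and $|Y^\dag|=r$, the adjacency condition ``$X\subset Y^\dag$ or $Y^\dag\subset X$'' resolves according to the sign of $r-k$. When $r<k$ only $Y^\dag\subset X$ is possible, so $X$ ranges over the $k$-subsets of $\mathscr{B}_n^{+}$ containing the fixed $r$-set $Y^\dag$, giving $d_{V_2}(r)=\binom{n-r}{k-r}$. When $r>k$ only $X\subset Y^\dag$ is possible, so $X$ ranges over the $k$-subsets of the $r$-set $Y^\dag$, giving $d_{V_2}(r)=\binom{r}{k}$. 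When $r=k$ both containments force $X=Y^\dag$, so $Y$ has the single neighbor $Y^\dag\in V_1$ and $d_{V_2}(k)=1$, which recovers the degree-$1$ fact already used in the connectivity and girth theorems.

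Next I would count the multiplicities. By the enumeration in the order theorem, the total number of $r$-vertices in $\phi(\mathscr{B}_n)$ is $N_r=2^{r-1}\binom{n}{r}$, and every such vertex lies in $V_2$ except when $r=k$, where the $\binom{n}{k}$ all-positive $k$-subsets are precisely the members of $V_1$. Hence $N_{V_2}(r)=2^{r-1}\binom{n}{r}$ for $r\neq k$, while $N_{V_2}(k)=\bigl(2^{k-1}-1\bigr)\binom{n}{k}$ and $N_{V_1}(k)=\binom{n}{k}$. Assembling these data yields the multiset of degrees
\[
\Big\{\,d_{V_2}(1)^{N_{V_2}(1)},\dots,d_{V_2}(k-1)^{N_{V_2}(k-1)},\;d_{V_1}(k)^{\binom{n}{k}},\;d_{V_2}(k)^{N_{V_2}(k)},\;d_{V_2}(k+1)^{N_{V_2}(k+1)},\dots,d_{V_2}(n)^{N_{V_2}(n)}\,\Big\},
\]
and sorting it in non-increasing order produces the claimed degree sequence.

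The only genuinely delicate point—the main obstacle—is the case split at $r=k$: it is the unique cardinality where the containment collapses to equality (forcing a single neighbor), and simultaneously the unique cardinality whose $r$-vertices are split between $V_1$ and $V_2$. Keeping these two effects separate, so that the $\binom{n}{k}$ vertices of $V_1$ are counted with degree $d_{V_1}(k)$ and the remaining $(2^{k-1}-1)\binom{n}{k}$ degree-$1$ vertices are counted in $V_2$ without double counting, is where I would concentrate the verification.
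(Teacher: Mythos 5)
Your proposal is correct and follows essentially the same route as the paper: compute $d_{V_2}(r)$ by splitting on $r<k$, $r=k$, $r>k$ via the containment condition, take $d_{V_1}(k)$ from the size theorem, and obtain the multiplicities $N_{V_2}(r)=2^{r-1}\binom{n}{r}$ for $r\neq k$ together with the split $N_{V_1}(k)=\binom{n}{k}$, $N_{V_2}(k)=(2^{k-1}-1)\binom{n}{k}$. Your added observation that the neighbourhood of $Y\in V_2$ depends only on $Y^\dag$, which justifies the notation $d_{V_2}(r)$, is a small but worthwhile clarification the paper leaves implicit.
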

\begin{proof}

 Let $d_{V_2}(r)$, where $r=1,2,3,\dotsc,k-1,k+1,\dotsc, n$, denote the degrees of $r$-vertices in $V_2$, and $d_{V_1}(k)$ denote the degree of any $k$-vertex in $V_1$. Let the multiplicities of degrees of any $k$-vertex in $V_1$, and $r$-vertices in $V_2$ be denoted by $ N_{V_1}(k)$ and  $ N_{V_2}(r)$ respectively.
 
 We have, 
 $d_{V_2}(1)=\binom{n-1}{k-1},\;N_{V_2}(1)=2^0\binom{n}{1} $, $d_{V_2}(2)=\binom{n-2}{k-2}, \; N_{V_2}(2)=2^1\binom{n}{2}$,
 $d_{V_2}(k-1)=\binom{n-(k-1)}{k-(k-1)}, \; N_{V_2}(k-1)=2^{k-2}\binom{n}{k-1}$, $d_{V_2}(k+1)=\binom{k+1}{k},\; N_{V_2}(k+1)=2^k\binom{n}{k+1}$, $\cdots$, $d_{V_2}(n)=\binom{n}{k}$, and  $N_{V_2}(n)=2^{n-1}\binom{n}{n}$.
 
 Let $u$ be any $k$-vertex in $V_1$. For $s=1,2,3,\dotsc,k-1$,  the number of $s$-vertices adjacent to $u$ is $2^{s-1}\binom{k}{s}$. The number of $k$-vertices adjacent to $u$ is $2^{k-1}\binom{k}{k}-1$. For $s=k+1,k+2,\dotsc,n$, the number of $s$-vertices adjacent to $u$ is 
 $2^{s-1}\binom{n-k}{t}$. Here, $t=1,2,\dotsc,n-k$. Hence, the degree of any $k$-vertex,which is denoted as  $u$,  in $V_1$ is  $d_{V_1}(k)=2^0\binom{k}{1}+2^1\binom{k}{2}+\dotsb +2^{k-1}\binom{k}{k}-1+2^k\binom{n-k}{1}+2^{k+1}\binom{n-k}{2}+\dotsb + 2^{n-1}\binom {n-k}{n-k}$. 
 
 The number of $k$-vertices in $V_1$ is $N_{V_1}(k)=\binom{n}{k}$. Every $k$-vertex in $V_2$ has degree $d_{V_2}(k)=1$. Then, the number of $k$-vertices in $V_2$ is $N_{V_2}(k)= (2^{k-1}-1)\binom{n}{k}$. Thus, the degree of every vertex in $H_B(n,k) $ and the number of vertices having a specific degree are determined. The degree sequence is obtained by arranging the sequence, 
 	$\Big\{d_{V_2}(1)^{N_{V_2}(1)}, d_{V_2}(2)^{N_{V_2}(2)}\dots ,d_{V_2}(k-1)^{N_{V_2}(k-1)},d_{V_1}(k)^{N_{V_1}(k)}, d_{V_2}(k)^{N_{V_2}(k)}$,	$d_{V_2}(k+1)^{N_{V_2}(k+1)},\dots d_{V_2}(n)^{N_{V_2}(n)} \Big\}$ of degrees with corresponding multiplicities as a monotonic non-increasing sequence.
\end{proof}

\begin{example}

	The degree sequence for $H_B(4,2) $ is obtained by arranging the sequence, $\left\{d_{V_2}(1)^{N_{V_2}(1)}, d_{V_1}(2)^{N_{V_1}(2)}, d_{V_2}(2)^{N_{V_2}(2)}, d_{V_2}(3)^{N_{V_2}(3)}, d_{V_2}(4)^{N_{V_2}(4)}\right\}=\\
 \{3^4, 19^6, 1^6, 3^{16}, 6^8\}$ of degrees with corresponding multiplicities as a monotonic, non-increasing sequence. That is, the degree sequence is $\{19^6, 6^8, 3^{16}, 3^4, 1^6\}$.
\end{example}
The following result gives the Omega invariant of $G=H_B(n,k)$:
\begin{theorem}
    The Omega invariant of $G=H_B(n,k)$ is
    
    $$\Omega(H_B(n,k))=\binom{n}{k}\sum\limits_{i=1}^n\binom{k}{i}2^{i-1}-\sum\limits_{i=1}^n\binom{n}{i}2^i.$$
\end{theorem}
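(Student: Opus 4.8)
The plan is to sidestep the term-by-term evaluation $\Omega(G)=\sum_i a_i(d_i-2)$ and instead lean on the identity $\Omega(G)=2(m-n)$ recorded in the Remark. Both ingredients are already in hand: the order $n=|V(G)|=\tfrac{3^n-1}{2}$ and the size $m=|E(G)|=\binom{n}{k}\bigl(\tfrac{3^k-3}{2}+2^{k-1}(3^{n-k}-1)\bigr)$ were computed in the earlier theorems. So the whole task reduces to substituting these two quantities into $\Omega=2m-2n$ and then rewriting the outcome in the binomial-sum shape of the claim. This is purely algebraic once the two invariants are substituted.

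I would dispose of the order half first, since it matches cleanly. Here $2n=3^n-1$, and applying the binomial theorem to $3^n=(1+2)^n=\sum_{i=0}^n\binom{n}{i}2^i$ and stripping the $i=0$ term yields $3^n-1=\sum_{i=1}^n\binom{n}{i}2^i$. This is exactly the subtracted sum appearing in the statement, so the $-n$ contribution to $\Omega=2(m-n)$ is reproduced immediately and without friction.

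The delicate half is the size contribution $2m=2\binom{n}{k}\,d_{V_1}(k)$, where $d_{V_1}(k)$ is the common degree of every $k$-vertex of $V_1$. To recover a $\binom{k}{i}2^{i-1}$-type sum I would run the binomial-theorem step in reverse on the two closed blocks of $d_{V_1}(k)$: the adjacencies of $u\in V_1$ to vertices of cardinality at most $k$ repackage as $\sum_{i=1}^{k}\binom{k}{i}2^{i-1}=\tfrac{3^k-1}{2}$, with the self-exclusion among $k$-vertices accounting for the $-1$ visible in the edge count, while the adjacencies to larger vertices repackage through the shifted binomials $\binom{n-k}{i-k}$ into $2^{k-1}(3^{n-k}-1)$. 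Note that $\binom{k}{i}=0$ for $i>k$, so the sum $\sum_{i=1}^n\binom{k}{i}2^{i-1}$ in the statement already truncates at $i=k$.

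The main obstacle is precisely this repackaging of $2d_{V_1}(k)$ into a single compact sum: one must track the $-1$ from excluding $u$ itself, the index shift $i\mapsto i-k$ in the upper block, and the factor $2^k$ carried by that block, then confirm that after multiplication by $\binom{n}{k}$ and subtraction of $\sum_{i=1}^n\binom{n}{i}2^i$ the two sides coincide. Before committing to the closed form I would sanity-check the reduction against $H_B(4,2)$, where $\Omega=2(m-n)=2(114-40)=148$; matching the proposed right-hand side against this value is the natural way to catch any stray factor of $2$ or dropped term in the degree-sum collapse. As a cross-check of the whole route one may instead sum $\sum_i a_i(d_i-2)$ directly over the degree sequence established in the previous theorem, which must return the same $2(m-n)$.
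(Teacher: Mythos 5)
Your route through $\Omega(G)=2(m-n)$ is legitimate and is genuinely different from the paper's: the paper evaluates $\sum_i(d_i-2)N_i$ directly by assigning every $i$-vertex the degree $\binom{n-i}{k-i}$ with multiplicity $2^{i-1}\binom{n}{i}$, an assignment that ignores the $\binom{n}{k}$ high-degree $k$-vertices of $V_1$ and gives degree $0$ instead of $\binom{i}{k}$ to the $i$-vertices with $i>k$. Your method sidesteps those errors because the order and size were computed correctly in the earlier theorems.

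However, the proposal defers exactly the step on which everything hinges (``the repackaging of $2d_{V_1}(k)$ into a single compact sum''), and that step cannot be carried out, because the stated identity is false. Substituting the known order and size gives
\begin{equation*}
2(m-n)=\binom{n}{k}\Bigl(3^k-3+2^k\bigl(3^{n-k}-1\bigr)\Bigr)-(3^n-1),
\end{equation*}
whereas the theorem's right-hand side equals $\binom{n}{k}\cdot\frac{3^k-1}{2}-(3^n-1)$, since $\binom{k}{i}=0$ for $i>k$ truncates the first sum at $i=k$. The block $2^k(3^{n-k}-1)$, contributed by the neighbours of $u\in V_1$ of cardinality greater than $k$, has no counterpart on the right, and the remaining terms carry the wrong factor. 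Your own proposed sanity check exposes this: $\Omega(H_B(4,2))=2(114-40)=148$, while the claimed formula evaluates to $6\cdot 4-80=-56$. So the proof cannot be completed as written; what your computation actually establishes is the corrected closed form displayed above, not the formula in the statement.
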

\begin{proof}
    By the definition of Omega invariant, we have

    \begin{align*}
        \Omega(H_B(n,k)) &=\sum_{i=1}^n(d_i-2)N_i\\
        &=\sum_{i=1}^n\left(\binom{n-i}{k-i}-2\right)2^{i-1}\binom{n}{i}\\
        &=\binom{n}{k}\sum_{i=1}^n\binom{k}{i}2^{i-1}-\sum_{i=1}^n\binom{n}{i}2^i.
    \end{align*}
\end{proof}
In \cite{cangul18,cangul19}, it was shown that the number of closed regions of a graph is given by \begin{equation}
    r(G)=\frac{\Omega(G)}{2}+c(G).\label{eq:1}
\end{equation}
Here $c(G)$ is the number of components of $G$. Hence the number of faces of the graph $H_B(n,k)$ is given by the following result:
\begin{theorem}
    The number of faces of the graph $H_B(n,k)$ is 
    $$r(H_B(n,k))=\binom{n}{k}\sum_{i=1}^n\binom{k}{i}2^{i-2}-\sum_{i=1}^n\binom{n}{i}2^{i-1}+c .$$
\end{theorem}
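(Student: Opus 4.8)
The plan is to apply equation~\eqref{eq:1} directly, substituting the closed form for $\Omega(H_B(n,k))$ established in the preceding theorem. Since equation~\eqref{eq:1} reads $r(G)=\Omega(G)/2+c(G)$ and the Omega invariant of $H_B(n,k)$ has already been computed, the entire argument reduces to a single substitution together with a termwise halving, followed by the addition of the component count.

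First I would restate the two ingredients. Equation~\eqref{eq:1} gives $r(H_B(n,k))=\tfrac12\,\Omega(H_B(n,k))+c$, where $c$ denotes the number of connected components, and the previous theorem supplies the value $\Omega(H_B(n,k))=\binom{n}{k}\sum_{i=1}^n\binom{k}{i}2^{i-1}-\sum_{i=1}^n\binom{n}{i}2^i$.

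Next I would divide the Omega invariant by $2$ term by term. This sends the factor $2^{i-1}$ appearing in the first sum to $2^{i-2}$ and the factor $2^i$ in the second sum to $2^{i-1}$, so that $\tfrac12\,\Omega(H_B(n,k))=\binom{n}{k}\sum_{i=1}^n\binom{k}{i}2^{i-2}-\sum_{i=1}^n\binom{n}{i}2^{i-1}$. Adding $c$ then produces the claimed expression verbatim.

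There is no genuine obstacle in this proof: every step is elementary, and the only point requiring attention is the bookkeeping of the exponents under division by two. For completeness one may observe that the connectivity theorems above establish that $H_B(n,k)$ is connected when $k>1$, so that $c=1$ in that case; the result is nonetheless stated in terms of the general component count $c$ in order to mirror the form of equation~\eqref{eq:1}.
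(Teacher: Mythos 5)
Your proposal is correct and follows exactly the same route as the paper: the paper's proof is the one-line remark that the result follows from the formula $r(G)=\Omega(G)/2+c(G)$ of Eqn.~(\ref{eq:1}) together with the previously computed Omega invariant, which is precisely the substitution and termwise halving you carry out. Your version merely makes the exponent bookkeeping and the role of $c$ explicit, which is harmless and arguably clearer.
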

\begin{proof}
    It follows by the formula of $r$ given in Eqn.(\ref{eq:1}).
\end{proof}
\begin{theorem}\label{count}

	Consider the graph $G= H_B(n,k),\:n>2,\;1<k<n\;\;\text{with}\; V(G)=V_1\cup V_2$  and  $u, v \in V(G)$, then 
	\footnotesize
	\begin{equation*}
		d(u, v) = 
		\begin{cases}
			1 & \text{if $u\in V_1$ and $v\in V_2$ are adjacent},\\
			2 & \text{if $u$ and $v$ are in $V_1$},\\
			3 & \text{if $u\in V_1$ and $v \in V_2$ are not adjacent},\\
			2,4 &  \text{if  $u$ and $v$ are in $V_2$}.
		\end{cases}
	\end{equation*}
\end{theorem}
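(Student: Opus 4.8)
The plan is to lean on the bipartite structure of $G$ at every step: since the parts are $V_1$ and $V_2$, any walk between two vertices of the same part has even length and any walk between vertices of different parts has odd length, so once I produce upper bounds on the distances the parities will force the exact values. Everything reduces to two auxiliary facts that I would prove first, after which the four cases fall out mechanically.

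\textbf{Key Lemma A:} every vertex $v\in V_2$ has at least one neighbour in $V_1$. Writing $r=|v|$, so that $v^\dag$ is an $r$-element subset of $\mathscr{B}_n^+$, I split on $r$. If $r>k$, choose any $k$-subset $w\subseteq v^\dag$; if $r=k$, take $w=v^\dag$ itself, which is an all-positive $k$-subset and hence lies in $V_1$; if $r<k$, extend $v^\dag$ to a $k$-subset $w$ of $\mathscr{B}_n^+$, which is possible because $r<k\le n$ leaves enough elements to adjoin. In each case $w\in V_1$ and either $w\subseteq v^\dag$ or $v^\dag\subseteq w$, so $w$ is adjacent to $v$. \textbf{Key Lemma B:} any two distinct vertices of $V_1$ lie at distance exactly $2$. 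Here I observe that the all-positive $n$-vertex $N=\{x_1,\dots,x_n\}$ belongs to $V_2$ (it is an $n$-set, not a $k$-set, as $k<n$) and has $N^\dag=\mathscr{B}_n^+$, so every $k$-subset $u\in V_1$ satisfies $u\subseteq N^\dag$; thus $N$ is a common neighbour of any two vertices of $V_1$, giving distance $2$.

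With these established, the cases are short. The first is immediate from the definition of adjacency, and the second is exactly Lemma B. For the third, with $u\in V_1$ and $v\in V_2$ non-adjacent, the distance is odd and at least $3$; Lemma A supplies a $V_1$-neighbour $u'$ of $v$, and $u'\neq u$ since otherwise $u$ would be adjacent to $v$, so $d(u,u')=2$ by Lemma B and $d(u,v)\le d(u,u')+1=3$, forcing equality. For the fourth, with $u,v\in V_2$, the distance is even and positive; choosing $V_1$-neighbours $u'$ of $u$ and $v'$ of $v$ via Lemma A gives $d(u,v)\le 1+d(u',v')+1\le 4$ by Lemma B, whence $d(u,v)\in\{2,4\}$.

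To justify that both listed values genuinely occur in the last case I would exhibit witnesses: the two $1$-vertices $\{x_1\},\{x_2\}\in V_2$ share a common $V_1$-neighbour, namely any $k$-subset containing $x_1$ and $x_2$, so they are at distance $2$; on the other hand, two $k$-vertices of $V_2$ with different absolute-value sets each have a \emph{unique} $V_1$-neighbour, since a $k$-subset $w\in V_1$ adjacent to a $k$-vertex $v\in V_2$ must satisfy $w\subseteq v^\dag$ or $v^\dag\subseteq w$ with $|w|=|v^\dag|=k$, forcing $w=v^\dag$; these unique neighbours are distinct, so such a pair has no common neighbour and sits at distance $4$. The main obstacle I anticipate is the bookkeeping in Lemma A, specifically checking that a suitable $k$-subset $w$ can be produced inside $\mathscr{B}_n^+$ for \emph{every} cardinality $r$ while keeping the adjacency relation's use of $\subseteq$ (equality allowed) straight, together with producing the genuine distance-$4$ pair in the last case, which is precisely what makes the value $4$ in the statement sharp rather than a mere upper estimate.
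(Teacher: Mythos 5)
Your proof is correct and follows essentially the same route as the paper's: both arguments rest on the all-positive $n$-vertex being a common neighbour of every pair of $V_1$-vertices, plus the fact that every $V_2$-vertex has a neighbour in $V_1$, to obtain the upper bounds, with the bipartite parity (left implicit in the paper, explicit in your write-up) supplying the lower bounds. The only substantive difference is in the last case: you establish that both values $2$ and $4$ are genuinely attained by exhibiting witness pairs, whereas the paper instead characterises exactly which pairs $u,v\in V_2$ are at distance $2$ (via conditions on $|u^\dag\cup v^\dag|$ and $|u^\dag\cap v^\dag|$), a characterisation it reuses later when counting distance-$4$ pairs.
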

\begin{proof}

Let $u\in V_1$ and $v\in V_2$. If $u$ and $v$ are adjacent, then $d(u,v)=1$. Suppose that $v$ is not adjacent to $u$. Since the degree of $v$ is at least $1$, it must be adjacent to some vertex $w\in V_1$. Let $x$ be an $n$-vertex in $V_2$. As $x$ is a common neighbour of $u$ and $w$, $u-x-w-v$ is the shortest path from $u$ to $v$, and hence $d(u,v)=3$.
Let $u,v\in V_1$. As any $n$-vertex $x$ is a common neighbour of $u$ and $v$, $u-x-v$ is the shortest path from $u$ to $v$, and hence $d(u,v)=2$.
Let $u,v\in V_2$. Then, $d(u,v)=2$ in one of the following three cases.
\begin{description}

\item[Case 1]There exists $x\in V_1$ such that $x$ is a superset of both $u^\dag$ and $v^\dag$.\\\item[Case 2] There exists $y\in V_1$ such that $y$ is a subset of both $u^\dag$ and $v^\dag$ .\\\item[Case 3]There exists $w\in V_1$ such that $w$ is a subset of $u^\dag$ and a\\\hspace*{1cm} super set of $v^\dag$ or $w$ is a subset of $v^\dag$ and a superset of $u^\dag$ .
\end{description}

In other words, $d(u, v)= 2$ if either $|u^\dag\cup v^\dag|\le k$ or $|u^\dag\cap v^\dag|\ge k$ or $|u^\dag\cap v^\dag|=|u^\dag|\le k$. If none of these three conditions are satisfied, then $d(u,v)\ne 2$. Choose $x,y\in V_1$ such that $d(x,u)=1$ and $d(y,v)=1$. Let $w$ be any $n$-vertex in $V_2$. Then $w$ is a common neighbour of $x$ and $y$. Thus, we get the shortest path $u-x-w-y-v$ of length $4$ and hence $d(u,v)=4$.
\end{proof}

The following proposition and corollary are immediate consequences of the lemma.
\begin{corollary}
    
\label{P37}
	
	If $n> 2$ and $1<k<n$, for the graph $H_{B}(n, k)$, the eccentricity is given by 
	\begin{equation*}
	e(v)= 
	\begin{cases}
	3 & \text{if $v\in V_1$,}\\
	2 & \text{if $v\in V_2$ is an n-vertex },\\
	4  & \text{if $v\in V_2$ is an r-vertex ,$1\le r<n$. } 	
	\end{cases}
	\end{equation*}
	
\end{corollary}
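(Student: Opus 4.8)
The plan is to read off all three eccentricity values directly from the distance formula of Theorem \ref{count}, since by definition $e(v)=\max_{u\in V(G)}d(v,u)$. Theorem \ref{count} already forces every pairwise distance to be at most $4$, so the only genuine work is to determine, for each of the three vertex types, which value is actually attained; concretely, this means producing an explicit vertex that realizes the claimed eccentricity.

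First, for $v\in V_1$: by Theorem \ref{count} the distance from $v$ to any vertex of $V_1$ is $2$ and to any vertex of $V_2$ is $1$ or $3$, so $e(v)\le 3$. For the reverse inequality I would exhibit a $V_2$ vertex at distance $3$, i.e.\ one that is not adjacent to $v$. Since $|v|=k<n$, there is an element $a\in\mathscr{B}_n^+\setminus v$; the $1$-vertex $\{a\}$ lies in $V_2$ (as $k>1$), and it is non-adjacent to $v$ because $\{a\}\not\subset v$ and $v\not\subset\{a\}$ (here $\{a\}^\dag=\{a\}$). Hence $e(v)=3$.

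Next, for an $n$-vertex $v\in V_2$: here $v^\dag=\mathscr{B}_n^+$, so every $k$-subset $X\in V_1$ satisfies $X\subset v^\dag$, making $v$ adjacent to all of $V_1$, so every $V_1$ vertex is at distance $1$. Any other $u\in V_2$ has degree at least $1$, hence is adjacent to some $w\in V_1$, and $w$ is adjacent to $v$; this gives a path $u-w-v$, so $d(v,u)\le 2$, and since $u,v$ lie in the same part $d(v,u)=2$. Therefore $e(v)=2$.

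The main obstacle is the third case: an $r$-vertex $v\in V_2$ with $1\le r<n$, where I must exhibit a $V_2$ vertex at distance exactly $4$ (the bound $e(v)\le 4$ being immediate from Theorem \ref{count}). By the case analysis in the proof of Theorem \ref{count}, $d(u,v)=2$ fails — so $d(u,v)=4$ — as soon as $u^\dag$ and $v^\dag$ have no common $k$-superset ($|u^\dag\cup v^\dag|>k$, failing Case 1), no common $k$-subset ($|u^\dag\cap v^\dag|<k$, failing Case 2), and no $k$-set sitting between them (failing Case 3); the last holds whenever $u^\dag$ and $v^\dag$ are incomparable. I would therefore construct $u\in V_2$ with $u^\dag\subseteq\mathscr{B}_n^+\setminus v^\dag$ disjoint from $v^\dag$ and $|u^\dag|=\max\{1,\,k-r+1\}$: the hypothesis $r<n$ guarantees a nonempty complement and $k<n$ guarantees it is large enough for this choice, while disjointness gives $|u^\dag\cap v^\dag|=0<k$ together with incomparability, and the size forces $|u^\dag\cup v^\dag|=r+|u^\dag|>k$. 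A final sign adjustment (for instance, making the smallest chosen element negative when $|u^\dag|\ge 2$) places $u$ in $V_2$ rather than $V_1$; the delicate point is precisely this placement in the borderline subcase $|u^\dag|=k$, which occurs only when $r=1$. This witness yields $e(v)=4$ and completes the proof.
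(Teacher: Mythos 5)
Your proof is correct and follows the same route as the paper's: both read the eccentricities off the distance classification of Theorem \ref{count}. The only real difference is that you explicitly construct vertices realizing each maximum (a non-adjacent $1$-vertex for $v\in V_1$, and for the third case a vertex $u$ with $u^\dag$ disjoint from $v^\dag$, $|u^\dag|=\max\{1,k-r+1\}$, plus the sign adjustment needed to keep $u$ out of $V_1$ when $|u^\dag|=k$), whereas the paper's proof merely lists the possible distance values and in the third case never verifies that distance $4$ is actually attained (it even misstates the $V_2$-to-$V_2$ distances there as ``$2$ or $3$''), so your version supplies the attainment arguments the paper omits.
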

\begin{proof}
    
Let $v\in V_1$. For any $u\in V_1$, $d(v,u)=2$. If $w\in V_2$ such that it is adjacent to $v$, then $d(v,w)=2$. If $w$ is not adjacent to $v$, then $d(v,w)=3$. Thus, $e(v)=3$.
Let $v\in V_2$ be an $n$-vertex. Since $v$ is adjacent to all vertices in $V_1$ and $d(v,u)=2$ for any $u\in V_2$, we get $e(v)=2$.
Let $v\in V_2$ be an $r$-vertex, $1\le r < n$.
The distance from $v$ to any vertex in $V_1$ is either $1$ or $3$, and the distance from $v$ to any vertex in $V_2$ is either $2$ or $3$.

\end{proof}
\begin{corollary}
	The diameter of $G=H_B(n,k),n>2,1<k<n $ is $\emph{diam}(G)=4$ and radius is $\emph{rad}(G)=2$.
\end{corollary}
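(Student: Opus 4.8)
The plan is to read off the diameter and radius directly from the eccentricity values computed in Corollary \ref{P37}. By the definitions recalled in the Preliminaries, the diameter $\mathrm{diam}(G)$ is the maximum eccentricity attained by the vertices of $G$, while the radius $\mathrm{rad}(G)$ is the minimum such eccentricity. Thus both quantities are determined once the set of attained eccentricity values is known, and the entire argument reduces to inspecting three numbers.

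First I would observe that Corollary \ref{P37} produces exactly three possible eccentricity values under the hypotheses $n>2$ and $1<k<n$: namely $e(v)=3$ for every $v\in V_1$, $e(v)=2$ for an $n$-vertex $v\in V_2$, and $e(v)=4$ for an $r$-vertex $v\in V_2$ with $1\le r<n$. Hence the set of eccentricities is contained in $\{2,3,4\}$, and it remains only to confirm that the extreme values $2$ and $4$ are actually realized by some vertices.

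Next I would verify that each relevant vertex class is nonempty. The full set $\{x_1,\dots,x_n\}$ is an $n$-vertex belonging to $V_2$, so the value $2$ is attained; and since $k>1$, every singleton $\{x_i\}$ is a $1$-vertex lying in $V_2$ (it has fewer than $k$ elements and so cannot be in $V_1$), providing an $r$-vertex with $r=1<n$, so the value $4$ is attained as well. Taking the maximum over all eccentricities then yields $\mathrm{diam}(G)=4$, and taking the minimum yields $\mathrm{rad}(G)=2$.

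Because everything collapses to comparing the three values $2,3,4$, there is essentially no obstacle in this corollary. The only point deserving a moment's care is the nonemptiness of the vertex classes realizing the extreme eccentricities, and this is immediate from the standing assumptions $n>2$ and $1<k<n$, which guarantee both the existence of an $n$-vertex in $V_2$ and the existence of a singleton vertex in $V_2$.
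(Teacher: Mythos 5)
Your proposal is correct and follows essentially the same route as the paper: take the maximum and minimum of the eccentricity values $\{2,3,4\}$ from Corollary \ref{P37}. The only difference is that you additionally verify the nonemptiness of the vertex classes attaining the extreme values, a small point of care the paper's proof leaves implicit.
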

\begin{proof}
    $\text{diam}(G)=\text{max}\{e(v): v\in V\}=\text{max}\{2,3,4\}=4$.
    
  \hspace{1cm}  $\text{rad}(G)=\text{min}\{e(v): v\in V\}=\text{min}\{2,3,4\}=2$.
\end{proof}
\begin{corollary}

	For $G=H_B(n,k)), \:n>2,\;1<k<n $, 
	\begin{enumerate}
		\item $\emph{Periphery},P(G)=\{v\in V_2 | v \;\text{is an    r-vertex},1\le r<n \}.$
		\item  \emph{Center}, $C(G)=\{v\in V_2 |\text{ v is an n-vertex} \}.$
		\item \emph{ Median}, M(G)= C(G).
	\end{enumerate}
\end{corollary}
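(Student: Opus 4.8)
The plan is to obtain the first two parts essentially for free from the eccentricity corollary (Corollary~\ref{P37}) together with the diameter/radius corollary, and then to settle the median by a direct status computation resting on Theorem~\ref{count}. For the periphery, recall $P(G)=\{v:e(v)=\mathrm{diam}(G)\}$ with $\mathrm{diam}(G)=4$; since Corollary~\ref{P37} gives $e(v)=4$ exactly for the $r$-vertices of $V_2$ with $1\le r<n$, part (1) is immediate. Similarly $C(G)$ consists of the vertices of minimum eccentricity, and $\mathrm{rad}(G)=2$ is attained precisely by the $n$-vertices of $V_2$, which yields part (2). Neither of these requires any computation beyond quoting the earlier corollaries.

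The content lies in part (3), $M(G)=C(G)$, for which I would compute the status $s(v)=\sum_u d(v,u)$ for each of the three vertex classes using the distance values of Theorem~\ref{count}, and then show the $n$-vertices are the unique minimisers. For an $n$-vertex $v$ every vertex of $V_1$ is at distance $1$ and every other vertex of $V_2$ is at distance $2$, so $s(v)=\binom{n}{k}+2(|V_2|-1)$. For a vertex of $V_1$ the distances are $2$ to the other $\binom{n}{k}-1$ members of $V_1$, $1$ to its $d_{V_1}(k)$ neighbours in $V_2$, and $3$ to the remaining $|V_2|-d_{V_1}(k)$ vertices of $V_2$. For an $r$-vertex of $V_2$ with $r<n$, the distances to $V_1$ are $1$ or $3$ (split according to its degree $d_{V_2}(r)$) and the distances within $V_2$ are $2$ or $4$; writing $b$ for the number of $V_2$-vertices at distance $4$ from it, the bookkeeping collapses cleanly to $s(r\text{-vertex})-s(n\text{-vertex})=2\big(\binom{n}{k}-d_{V_2}(r)+b\big)$. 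Since $b\ge0$ and, by the degree-sequence theorem, $d_{V_2}(r)=\binom{n-r}{k-r}$ or $\binom{r}{k}$, both strictly below $\binom{n}{k}$ for $r<n$, this difference is strictly positive, so no $r$-vertex can lie in the median.

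It then only remains to beat the vertices of $V_1$, and here the same bookkeeping gives the compact identity $s(V_1\text{-vertex})-s(n\text{-vertex})=|V(G)|-2\,d_{V_1}(k)$, so that $M(G)=C(G)$ is equivalent to the single inequality $|V(G)|>2\,d_{V_1}(k)$, i.e.\ $\tfrac{3^n-1}{2}>3^k-3+2^{k}\big(3^{n-k}-1\big)$ after substituting the order formula and the degree formula for $d_{V_1}(k)$. This exponential estimate is the main obstacle: clearing denominators reduces it to $3^{n-k}\big(3^k-2^{k+1}\big)>2\cdot 3^k-5-2^{k+1}$, whose dominant comparison is $\tfrac12 3^n$ against $2^k3^{n-k}$, and since $\tfrac{3^n/2}{2^k3^{n-k}}=\tfrac12(3/2)^k>1$ once $k\ge2$ the leading behaviour is favourable. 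The delicate point is that the lower-order terms make the margin genuinely small for small $n$, so I would isolate this estimate, bound each side separately over the range $1<k<n$, and check the boundary cases by hand, since this is precisely the step where the constraints on $n$ and $k$ must be used. Once the inequality is established, the $n$-vertices are the unique status-minimisers, whence $M(G)$ is exactly the set of $n$-vertices of $V_2$, which is $C(G)$ by part (2).
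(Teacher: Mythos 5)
Parts (1) and (2) of your argument coincide with the paper's: both follow by reading off the eccentricities from Corollary~\ref{P37} together with the diameter/radius corollary, and there is nothing to add there. For part (3) your bookkeeping is correct and considerably more careful than the paper's own proof, which computes only the status of an $n$-vertex and then asserts, from the qualitative observation that other vertices see some vertices at distance $3$ or $4$, that the $n$-vertices minimise status. Your identities $s(u)-s(v)=|V(G)|-2\,d_{V_1}(k)$ for $u\in V_1$ and $s(w)-s(v)=2\bigl(\binom{n}{k}-d_{V_2}(r)+b\bigr)$ for an $r$-vertex $w\in V_2$ with $r<n$ are both right, and the second one does dispose of all such $w$, since $d_{V_2}(r)<\binom{n}{k}$ for every $r<n$.

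The genuine problem is the inequality you defer, $|V(G)|>2\,d_{V_1}(k)$: it is false for $(n,k)=(3,2)$ and fails to be strict for $(n,k)=(4,3)$. Rewriting it as $(3^{n-k}-1)(3^{k}-2^{k+1})>3^{k}-5$, for $k=2$ it reads $3^{n-2}>5$, which fails at $n=3$; for $k=3$ it reads $3^{n-3}>3$, which is an equality at $n=4$; for all other admissible $(n,k)$ it holds. Concretely, in $H_B(3,2)$ one has $|V(G)|=13$ and $d_{V_1}(2)=7$, so every vertex of $V_1$ has status $2\cdot2+7+3\cdot3=20$ while every $3$-vertex has status $3+2\cdot9=21$, and the median is $V_1$, not the centre; in $H_B(4,3)$ both classes have status $74$, so the median is $V_1$ together with the $4$-vertices. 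Hence the deferred estimate cannot be established on the full range $n>2$, $1<k<n$: part (3) of the corollary is itself false for these two parameter pairs, and your argument, completed along the lines you indicate, proves $M(G)=C(G)$ only for the remaining $(n,k)$. This is exactly the case analysis that the paper's qualitative proof glosses over (its status computation for an $n$-vertex also omits the contribution of the other $n$-vertices, though that slip does not affect the comparison).
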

\begin{proof}

 As the eccentricity of any $r$-vertex $v$, where $1\le r<n$ is $e(v)=4=\text{diam}(G)$, we get, $\text{Periphery}, P(G)=\{v\in V_2|   v \;\text{is an r-vertex},1\le r<n \}$ As the eccentricity of any $n$-vertex $v$ is $e(v)=2=\text{rad}(G)$, we get, $\text{Center}, C(G)=\{v\in V_2|   v \;\text{is an n-vertex}\}$.

 For finding the median($G$), we find the status of vertices in $V$. We have the status of any vertex $v\in G$,  $s(v)=\sum\limits_ {u\in V(G)} d(v,u)$. First, we find the status of any $n$-vertex $v$ in $V_2$. The sum of the distances from $v$ to $\binom{n}{k}$ vertices in $V_1$ is $\binom{n}{k}$.  The sum of the distance from $v$ to $\frac{3^n-1}{2}-\binom{n}{k}-2^{n-1}$, $r$-vertices, where $1\le r <n$ in $V_2$  is  $2\left(\frac{3^n-1}{2}-\binom{n}{k}-2^{n-1}\right)$. Therefore, status of $v$ is $2\left(\frac{3^n-1}{2}-\binom{n}{k}-2^{n-1}\right)+\binom{n}{k}$. There are vertices at distances $1, 2, $ and $3$ from vertices in $V_1$. There are vertices at distances $1, 2, 3$, and $4$ from  $r$-vertices in $V_2 $. This leads to the conclusion that the status of any $n$-vertex is minimum compared to other vertices in $V$. Thus, Median, $M(G)$=$C(G)=\{v\in V_2|\text{ v is an n-vertex} \}.$

\end{proof}
\begin{remark}
	
	Let $G=H_B(n,k)$ with bipartition, $V(G)=V_1\cup V_2$. We denote the set of all  pairs (unordered) of vertices of $V(G)$ by
	 $S=\{\{u,v\}\mid u,v\in V(G)\}$. Then, $S$ contains vertex pairs at distances $1, 2, 3, $ and $4$. The subsets of $S$ are of the form $S\big(V(G),h\big)=\{\{u,v\}\mid  d(u,v)=h, \;\; 1\le h\le 4\}$. Then, $S=\bigcup_{h=1}^4 S\big(V(G),h\big)$.  Let $d^{(h)}(u, v)$ be  the cardinality of $S\big(V(G),h\big)$ for $h=1,2,3,$ and $4$. We denote by $S(V_1,2)$ and $S(V_2,2)$, respectively,  the sets of vertex pairs of $V_1$ and $V_2$ at distance $2$.
	  We have, $S\big(V(G),2\big)=S(V_1,2)\cup S(V_2,2)$. Let $d^{(2)}_{V_1}(u,v)$ and $d^{(2)}_{V_2}(u,v)$, respectively, denote the cardinalities of $S(V_1,2)$ and $S(V_2,2)$.
	Consequently, we get,  $d^{(2)}(u,v)=d^{(2)}_{V_1}(u,v) +d^{(2)}_{V_2}(u,v)$. As vertex pairs at distance $4$ exists only in $V_2$, we denote the set containing them as $S(V_2,4)$. We have, $S\big(V(G),4\big)=S(V_2,4)$. Let  $d^{(4)}_{V_2}(u,v)$ be the cardinality of $S(V_2,4)$. Then, 
	$d^{(4)}(u, v)=d^{(4)}_{V_2}(u,v)$.
\end{remark}

 The cardinalities of $S\big(V(G),1\big)$ and  $S\big(V(G),3\big)$, the total number of vertex pairs at distance $2$ from $V_2$ denoted by $d^{(2)}_{V_2}(u,v) $, and the total number of vertex pairs $\{u,v\}$ at distances of $2$ and $4$, where $u$ and $v$ are from $V_2$, are determined in the next theorem
\begin{theorem}\label{prop18}
	Consider  the bipartite Kneser B type-k graph $G=H_B(n,k)), \:n>2,\;1<k<n $. For $u,v\in V$, 
	\begin{enumerate}
\item 	$d^{(1)}(u,v)=\binom{n}{k} \left(\dfrac{3^k-3}{2}+2^{k-1}(3^{n-k}-1)\right)$.

\item  $d^{(3)}(u,v)=\binom{n}{k}\left(\frac{3^n-1}{2}-\binom{n}{k}-\left(\frac{3^k-3}{2}\right)-2^{k-1}(3^{n-k}-1)\right)$.
\item  $d^{(2)}_{V_1}(u,v)=\binom{\binom{n}{k}}{2}$.
\item $d_{V_2}^{(2)}(u,v) + d_{V_2}^{(4)}(u,v)=\dbinom{\frac{3^n-1}{2}-\tbinom{n}{k}}{2}$.\label{problem}
\end{enumerate}

\end{theorem}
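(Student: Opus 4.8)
The plan is to read off all four quantities directly from the distance classification established in Theorem~\ref{count}, combined with the order formula $|V(G)|=\frac{3^n-1}{2}$ and the size formula for $|E(G)|$ proved earlier. Throughout I will use that $|V_1|=\binom{n}{k}$, and consequently $|V_2|=\frac{3^n-1}{2}-\binom{n}{k}$. For the first assertion, Theorem~\ref{count} says $d(u,v)=1$ exactly when $u\in V_1$ and $v\in V_2$ are adjacent; hence the pairs at distance $1$ are precisely the edges of $G$, so $d^{(1)}(u,v)=|E(G)|=\binom{n}{k}\left(\frac{3^k-3}{2}+2^{k-1}(3^{n-k}-1)\right)$. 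For the within-$V_1$ count, Theorem~\ref{count} gives $d(u,v)=2$ for every pair of distinct vertices of $V_1$; since there are $\binom{|V_1|}{2}$ such pairs, $d^{(2)}_{V_1}(u,v)=\binom{\binom{n}{k}}{2}$.

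For the distance-$3$ count, every unordered pair with one endpoint in $V_1$ and the other in $V_2$ has distance either $1$ (if adjacent) or $3$ (if not), again by Theorem~\ref{count}. The total number of such cross pairs is $|V_1|\,|V_2|$, so $d^{(3)}(u,v)=|V_1|\,|V_2|-d^{(1)}(u,v)$. Substituting $|V_1|=\binom{n}{k}$, $|V_2|=\frac{3^n-1}{2}-\binom{n}{k}$, and the value of $d^{(1)}$ from the first part, then factoring out $\binom{n}{k}$, yields the stated expression. For the final assertion, Theorem~\ref{count} shows that any two vertices of $V_2$ are at distance $2$ or $4$, so these two classes partition all $\binom{|V_2|}{2}$ within-$V_2$ pairs; hence $d^{(2)}_{V_2}(u,v)+d^{(4)}_{V_2}(u,v)=\binom{|V_2|}{2}=\binom{\frac{3^n-1}{2}-\binom{n}{k}}{2}$.

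No part presents a genuine obstacle: the content is entirely a consequence of the exhaustive, mutually exclusive distance cases of Theorem~\ref{count}. The only place demanding care is the algebraic simplification in the distance-$3$ count, where one must verify that $\binom{n}{k}\left(\frac{3^n-1}{2}-\binom{n}{k}\right)-d^{(1)}(u,v)$ collapses to a single factor of $\binom{n}{k}$ times the bracketed difference; and one should confirm that the classification genuinely covers every pair (within $V_1$, within $V_2$, and across the bipartition) with no pair counted twice, which is exactly the partition $S=\bigcup_{h=1}^4 S(V(G),h)$ recorded in the preceding remark.
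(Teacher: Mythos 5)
Your proof is correct and follows essentially the same route as the paper: all four counts are read off from the exhaustive distance classification of Theorem~\ref{count} together with the known values of $|V(G)|$ and $|E(G)|$. The one place you improve on the paper is part (4): the paper's displayed computation subtracts only the cross pairs from $\binom{|V(G)|}{2}$, which actually yields $\binom{|V_1|}{2}+\binom{|V_2|}{2}$ rather than $\binom{|V_2|}{2}$, whereas your direct observation that the within-$V_2$ pairs are exactly those at distance $2$ or $4$, hence number $\binom{|V_2|}{2}$, is the clean and correct justification of the stated formula.
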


\begin{proof}
	$d(u,v)=1$ when $u\in V_1$ and $v\in V_2$ are adjacent. Thus, $d^{(1)}(u,v)=|E(G)|=\binom{n}{k} \left(\dfrac{3^k-3}{2}+2^{k-1}(3^{n-k}-1)\right)$. 
	$d(u,v)=2$ when $u$ and $v$ are in $V_1$. As there are $\binom{n}{k}$ $k$-vertices in $V_1$, $d^{(2)}_{V_1}(u,v)=\binom{\binom{n}{k}}{2}$. $d(u,v)=3$ when $u\in V_1$ and $v\in V_2$ are not adjacent.
	
	The total number of unordered pairs of vertices such that one vertex  is from $V_1$ and the other from $V_2$ is $\binom{n}{k}(|V(G)|-\binom {n} {k} )$. 	Thus, $d^{(3)}(u,v)= \binom{n}{k}(|V(G)|-\binom {n} {k} )-|E(G)|=\binom{n}{k}\left(\frac{3^n-1}{2}-\binom{n}{k}-\left(\frac{3^k-3}{2}\right)-2^{k-1}(3^{n-k}-1)\right). $

	Given that \( d_{V_2}^{(2)}(u, v) \) and \( d_{V_2}^{(4)}(u, v) \) are the counts of pairs at distances 2 and 4 respectively, it follows from theorem \ref{count} that
	
	\begin{align*}d_{V_2}^{(2)}(u,v) + d_{V_2}^{(4)}(u,v)&=\binom{|V(G)|}{2}-\left(|E(G)|+\binom{n}{k}\left(|V(G)|-\binom{n}{k}\right)-|E(G)|\right)\\&=\dbinom{\frac{3^n-1}{2}-\tbinom{n}{k}}{2}\end{align*}.

\end{proof}

 $d_{V_2}^{(4)}(u,v)$ is computed in the following theorem. As any $i$-vertex $u$ and $j$-vertex $v$ in $V_2$ can have both positive and negative components, whenever we say common elements in an $i$ vertex and a $j$-vertex, we mean $|u^\dag \cap v^\dag|$.
\begin{theorem}\label{prop19}

    Let $P_{i,j}$ be the number of unordered pairs of $i$-vertices and $j$-vertices of $V_2$ that are at distance $4$. 
    
    For $u, v\in V_2$ in $H_B(n,k)$, $d_{V_2}^{(4)}(u,v)=\sum\limits_{\substack{i,j\in \{1,2,\dotsc n-1\}\\i\leq j\\k+1\leq i+j \leq n+k-1}} P_{i,j}$.
    
    Here,
$P_{i,j}=\begin{cases}\sum\limits_t\binom{n}{i}2^{i-1}2^{j-1}\binom{n-i}{j-t}\binom{i}{t}\quad \text{for $i\neq k$ and $j\neq k $}\\

\sum\limits_t\binom{n}{k}(2^{k-1}-1)2^{j-1}\binom{n-k}{j-t}\binom{k}{t}\quad\text{for}\ i= k\; \text{and}\;j\neq k\\

\sum\limits_t\binom{n}{i}2^{i-1}(2^{k-1}-1)\binom{n-i}{k-t}\binom{i}{t}\quad\text{for}\ i\neq k\; \text{and}\;j= k \\
\frac{1}2{}\left(\sum\limits_t\binom{n}{k}(2^{k-1}-1)^2\binom{n-k}{k-t}\binom{k}{t}\right)\quad\text{for}\ i=j=k\\

\frac{1}{2}\left(\sum\limits_t\binom{n}{i}2^{2(i-1)}\binom{n-i}{i-t}\binom{i}{t}\right) for\; i=j\; \text{ and } j\ne k

\end{cases}$

Here, $|u^\dag\cap v^\dag |=t$ and $t$ is a non-negative integer such that $i+j-n\le t< i+j-k$ and $t< \text{min}\{i,k\}$

\end{theorem}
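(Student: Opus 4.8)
The plan is to fix the overlap parameter together with the two sizes, translate the distance-$4$ criterion of Theorem~\ref{count} into an explicit range for the overlap, and then count pairs by first choosing the underlying absolute-value sets and afterwards assigning signs. For an unordered pair $\{u,v\}$ with $u,v\in V_2$ write $i=|u^\dag|$, $j=|v^\dag|$ and $t=|u^\dag\cap v^\dag|$, and assume $i\le j$, which matches the indexing of the sum. The three simplified ``distance $2$'' conditions from the proof of Theorem~\ref{count} read $i+j-t\le k$, then $t\ge k$, and finally $|u^\dag\cap v^\dag|=|u^\dag|\le k$ (together with its mirror image obtained by interchanging $u$ and $v$). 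Thus $d(u,v)=4$ holds exactly when all three fail.

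First I would negate the first two conditions, obtaining $t<i+j-k$ and $t<k$, and combine these with the purely set-theoretic bounds $\max\{0,\,i+j-n\}\le t\le\min\{i,j\}=i$. The crucial step is the third condition. Since we already have $t<k$, the equality $t=i$---the only way $u^\dag\subseteq v^\dag$ can occur when $i\le j$---would give $i=t<k$, hence $i\le k$, so the third condition would fire and force $d(u,v)=2$; the symmetric possibility $t=j$ is ruled out by $t\le i\le j$. Therefore distance $4$ forces the strict inequality $t<i$, and conversely $t<i$ excludes both subset relations. Collecting the bounds, $d(u,v)=4$ holds precisely when
\begin{equation*}
\max\{0,\,i+j-n\}\le t<\min\{i,k\}\quad\text{and}\quad t<i+j-k,
\end{equation*}
which is exactly the range recorded in the statement.

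I expect this subset/strict-inequality bookkeeping to be the main obstacle, since it is where the three cases interact and where the boundary value $t=i$ must be excluded. From the nonemptiness of the $t$-range one then reads off the admissible index set: the requirement $\max\{0,\,i+j-n\}<i+j-k$ forces $i+j\ge k+1$, the requirement $\max\{0,\,i+j-n\}<k$ forces $i+j\le n+k-1$, and the case $j=n$ makes the lower bound $t\ge i$ clash with $t<i$, so that $P_{i,n}=0$ and the indices are confined to $\{1,2,\dots,n-1\}$.

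It then remains to count, for each admissible triple $(i,j,t)$, the pairs realizing those parameters. I would select $u^\dag$ in $\binom{n}{i}$ ways, then $v^\dag$ with $|u^\dag\cap v^\dag|=t$ in $\binom{i}{t}\binom{n-i}{j-t}$ ways, and finally assign signs: an absolute-value set of size $m$ yields $2^{m-1}$ vertices of $V_2$ when $m\ne k$, but only $2^{k-1}-1$ when $m=k$, because the all-positive $k$-set is the unique such set lying in $V_1$ rather than $V_2$. Multiplying the choice of sets by the two sign multiplicities and summing over the admissible $t$ gives $P_{i,j}$; when $i=j$ the ordered selection of $u^\dag$ and $v^\dag$ generates each unordered pair twice (and $u\ne v$ is automatic, since $t<i$ forces $u^\dag\ne v^\dag$), so a factor $\tfrac{1}{2}$ must be inserted. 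Specializing the sign multiplicities in the five mutually exclusive situations---$i\ne k$ and $j\ne k$ with $i\ne j$, then $i=k<j$, then $i<j=k$, then $i=j=k$, and finally $i=j\ne k$---reproduces the five displayed expressions for $P_{i,j}$, and summing over all admissible $i\le j$ yields $d^{(4)}_{V_2}(u,v)$.
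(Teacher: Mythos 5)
Your proposal is correct and follows essentially the same route as the paper: translate the distance-$4$ condition into the stated range for $t=|u^\dag\cap v^\dag|$, then count pairs by choosing the absolute-value sets via $\binom{n}{i}\binom{i}{t}\binom{n-i}{j-t}$, multiply by the sign multiplicities $2^{m-1}$ (or $2^{k-1}-1$ when $m=k$), and halve when $i=j$. Your derivation of the $t$-range and of the admissible index set from Theorem~\ref{count} is actually spelled out more carefully than in the paper, which simply asserts those constraints, but the underlying argument is the same.
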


\begin{proof}
    
Choose an $i$-vertex $u=\{x_1,x_2,\dotsc,x_i\}$ and a $j$-vertex $v=\{y_1,y_2,\dotsc,y_j\}$ in $V_2$ such that $d(u,v)=4$. Here $i,j\in \{1,2,\dotsc n-1\},i\leq j,\text{and}\:k+1\leq i+j \leq n+k-1$.By the construction of $H_B(n,k)$, the  vertices at distance $4$ satisfy the conditions: $|u^\dag\cap v^\dag|=t$, $t\ge 0$, $i+j-n\le t< i+j-k$, and $t< \text{min}\{i,k\}$. Corresponding to an $i$-element subset of $\mathscr{B}_n^+=\{ a_1,  a_2,  a_3, \dots,  a_{n-1}, a_n \}$, $2^{i-1}$, $i$-element subsets or $i$-vertices are seen in $\phi(\mathscr{B}_n)$. Similarly, corresponding to a $j$-element subset of $\mathscr{B}_n^+$, $2^{j-1}$, $j$-vertices are there in $\phi(\mathscr{B}_n)$. $P_{i,j}$ is calculated in various cases.
\begin{description}
    
 \item[Case 1] For $i,j$ such that $i< j$ and $i\ne k$ and $j\ne k$.

 There are $2^{j-1}$, $j$-vertices at distance $4$ to $u$. As there are $2^{i-1}$ vertices corresponding to $u$, the total number of $4$ pairs between $u$ and $v$ and their corresponding vertices is $2^{i-1}2^{j-1}$. As $|u^\dag\cap v^\dag|=t$, the remaining $j-t$ elements in any other $j$-vertex at distance $4$ can be selected from $n-i$ elements of $\mathscr{B}_n^+$ in $\binom{n-i}{j-t}$ ways. Also, $t$ elements can be selected from $i$-vertex in $\binom{i}{t}$ways.   The total number of $i$ element subsets of $\mathscr{B}_n^+$ is $\binom{n}{i}$. Using the restrictions on $t,i \;\; \text{and} \;\;j $, we get $P_{i,j}=\sum\limits_t\binom{n}{i}2^{i-1}2^{j-1}\binom{n-i}{j-t}\binom{i}{t}$.

 \item[Case 2] For $i,j$ such that $i< j$, $i=k$ and $j\ne k$. 
 
 Of the $2^{k-1}$, $k$-vertices corresponding to $\{x_1,x_2,\dotsc,x_k\}$ in $\phi(\mathscr{B}_n)$, $(2^{k-1}-1)$are in $V_2$ and $1$ in $V_1$. Therefore, $P_{i,j}=\sum\limits_t\binom{n}{i}(2^{k-1}-1)2^{j-1}\binom{n-k}{j-t}\binom{k}{t}$.\item[Case 3] For $i,j$ such that $i< j$, $i\ne k$ and $j=k$. 
 
 Of the $2^{k-1}$, $k$-vertices corresponding to $\{y_1,y_2,\dotsc,y_k\}$ in $\phi(\mathscr{B}_n)$, 
 $(2^{k-1}-1)$are in $V_2$ and $1$ in $V_1$. Therefore, $P_{i,j}=\sum\limits_t\binom{n}{i}2^{i-1}(2^{k-1}-1)\binom{n-i}{k-t}\binom{i}{t}$.
 \item[case 4] For $i,j$ such that $i=j=k$.
 
  Using similar arguments, we conclude that $P_{i,j}=\frac{1}{2}\left(\sum\limits_t\binom{n}{k}(2^{k-1}-1)^2\binom{n-k}{k-t}\binom{k}{t}\right)$  \item[Case 5] For $i,j$ such that $i=j$ and $j\ne  k $. 
  
  Here,  $P_{i,j}=\frac{1}{2}\left(\sum\limits_t\binom{n}{i}2^{2(i-1)}\binom{n-i}{i-t}\binom{i}{t}\right) .$
 
   \end{description}

For $i,j\in \{1,2,\dotsc n-1\},i\leq j,\text{and}\:k+1\leq i+j \leq n+k-1$, the total number of unordered pairs of vertices from  $V_2$ such that $d(u,v)=4$ is 
\begin{align*}
    d^{(4)}_{V_2}(u,v) =&\sum P_{i,j}\\
    =&P_{1,k}+P_{1,k+1}+\dotsb P_{1,n-1}+\\
    & P_{2,k-1}+P_{2,k}+\dotsb P_{2,n-1}+\\
    &\dotsb\dotsb\dotsb\dotsb\dotsb\dotsb\dotsb\dotsb+\\
    & P_{k,k}+\dotsb P_{k,n-1}+P_{k+1,k+1}+\dotsb P_{k+1,n-2}+\\
    &\dotsb\dotsb\dotsb\dotsb\dotsb\dotsb\dotsb\dotsb+\\
    & P_{\frac{n+k-1}{2},\frac{n+k-1}{2}}.  \qquad    (\text {When} \;\;  n+k-1  
   \;\;\text{is even})
\end{align*}
\begin{align*}
    d^{(4)}_{V_2}(u,v) =&\sum P_{i,j}\\
    =&P_{1,k}+P_{1,k+1}+\dotsb P_{1,n-1}+\\
    & P_{2,k-1}+P_{2,k}+\dotsb P_{2,n-1}+\\
    &\dotsb\dotsb\dotsb\dotsb\dotsb\dotsb\dotsb\dotsb+\\
    & P_{k,k}+\dotsb P_{k,n-1}+P_{k+1,k+1}+\dotsb P_{k+1,n-2}+\\
    &\dotsb\dotsb\dotsb\dotsb\dotsb\dotsb\dotsb\dotsb+\\
    & P_{\frac{n+k-2}{2},\frac{n+k-2}{2}} +  P_{\frac{n+k-2}{2},\frac{n+k}{2}}.\qquad    (\text {When} \;\;  n+k-1  
   \;\;\text{is odd})
\end{align*}

\end{proof}
\begin{remark}
	
	From theorem \ref{prop18}, we have got the cardinalities of $S\big(V(G),1\big)$ and  $S\big(V(G),3\big)$. Using theorems \ref{prop18} and \ref{prop19}, the cardinalites of $S\big(V(G),2\big)$ and $S\big(V(G),4\big)$ are obtained as $d^{(2)}(u,v)=d^{(2)}_{V_1}(u,v) +d^{(2)}_{V_2}(u,v)$ and  $d^{(4)}(u,v)=d^{(4)}_{V_2}(u,v)$.
\end{remark}

\begin{example}
    
    Consider the partite sets $V_1$ and $V_2$ of $H_B(4,2)$ as given in example \ref{hbexample}.
    
    	\begin{align*}d^{(1)}(u,v)&=\binom{n}{k} \left(\dfrac{3^k-3}{2}+2^{k-1}(3^{n-k}-1)\right).\\
    	&=\binom{4}{2} \left(\dfrac{3^2-3}{2}+2^{2-1}(3^{4-2}-1)\right).\\
    	&=114.\end{align*}
    	 \hspace*{1.2cm}$d^{(2)}_{V_1}(u,v)\;=\binom{\binom{n}{k}}{2}=\binom{\binom{4}{2}}{2}=15.$
    	 
    	 \begin{align*}d^{(3)}(u,v)&=\binom{n}{k}\left(\frac{3^n-1}{2}-\binom{n}{k}-\left(\frac{3^k-3}{2}\right)-2^{k-1}(3^{n-k}-1)\right).\\
    	 &=\binom{4}{2}\left(\frac{3^4-1}{2}-\binom{4}{2}-\left(\frac{3^2-3}{2}\right)-2^{2-1}(3^{4-2}-1)\right)\\
    	 &=90.\end{align*}
    	
     Then, $d^{(4)}(u,v)=d^{(4)}_{V_2}(u,v)=\sum\limits_{\substack{i,j\in \{1,2,3\}\\i\leq j\\3\leq i+j \leq 5}} P_{i,j}=P_{1,2}+P_{1,3}+P_{2,2}+P_{2,3}$.

    \vspace{1cm}
   For $t=0$,  $P_{1,2}=\binom{4}{1}2^{1-1}(2^{2-1}-1)\binom{4-1}{2-0}\binom{1}{0}=12$.

    For $t=0$, $P_{1,3}=\binom{4}{1}2^{1-1}2^{3-1}\binom{4-1}{3-0}\binom{1}{0}=16$.   
    
   For $t=0,1$, $P_{2,2}=\frac{1}{2}\left(\binom{4}{2}(2^{2-1}-1)^2)\binom{4-2}{2-0}\binom{2}{0}+\binom{4}{2}(2^{2-1}-1)^2\binom{4-2}{2-1}\binom{2}{1}\right)=15$.

      For $t=1$,  $P_{2,3}=\binom{4}{2}(2^{2-1}-1)2^{3-1}\binom{4-2}{3-1}\binom{2}{1}=48$.
      
      Therefore, $d^{(4)}(u,v)=d^{(4)}_{V_2}(u,v)=12+16+15+48=91$.

      Using the  identity (\ref{problem}) in theorem (\ref{prop18}),  we get

      $d_{V_2}^{(2)}(u,v) + d_{V_2}^{(4)}(u,v)=\binom{\frac{3^n-1}{2}-\binom{n}{k}}{2}$.
      
      Therefore, $d_{V_2}^{(2)}(u,v)=\binom{\frac{3^n-1}{2}-\binom{n}{k}}{2}-d_{V_2}^{(4)}(u,v)
      =561-91=470 $

      $d^{(2)}(u,v)=d^{(2)}_{V_1}(u,v) +d^{(2)}_{V_2}(u,v)=15+470=485$.

      \begin{table}[ht]
      	\renewcommand\arraystretch{1.5}
      	
      	\centering
      	\caption{A table showing the values of $d^{(h)}(u,v)$, $1\le h\le 4$ and $H_B(n,k)$ for some  values of $n$ and $k$.}
      	\begin{tabular}[t]{lrrrr}
      		\hline

            &$d^{(1)}(u,v)$&$d^{(2)}(u,v)$&$d^{(3)}(u,v)$&$d^{(4)}(u,v)$\\
      		\hline
      		$H_B(4,2)$&114&485&90&91\\
      		$H_B(4,3)$&80&486&64&150\\
      		$H_B(5,2)$&550&5275&560&875\\
      	
      		$H_B(5,3)$&440&4125&670&2025\\
      		$H_B(5,4)$&275&4715&305&1965\\
      		$H_B(6,2)$&2445&54050&2790&6781\\
      		\hline
      	\end{tabular}
      \end{table}%

\end{example}

\section{Conclusion}

In this paper, we have determined various invariants of $H_B(n,k)$. There is scope for applications in various disciplines. As the degree sequence of $H_B(n,k)$ and the distance between any pair of vertices in  $H_B(n,k)$ are determined, hundreds of molecular descriptors can be calculated. Distance properties also lead to the determination of various types of metric dimensions. Centrality measures such as degree centrality, Closeness centrality, betweenness centrality and eigen vector centrality can also be obtained.

\subsection*{Conflict of Interest}

The authors hereby declare that there is no potential conflict of interest.
\subsection*{Acknowledgement}

The first author is a doctoral fellow in mathematics at University College, Thiruvananthapuram. This research has been promoted and supported by  the University of Kerala.

\bibliographystyle{amsplain}
\bibliography{bibliohbnkpart1}
\end{document}